\newcommand{\CC}{\mathbb{C}}
\newcommand{\D}{\mathcal{D}}
\newcommand{\E}{\mathcal{E}}
\newcommand{\en}{ext\,\nu}
\newcommand{\ten}{ext\,\tilde{\nu}}
\DeclareMathOperator{\ord}{ord}
\DeclareMathOperator{\Pic}{Pic}
\DeclareMathOperator{\mx}{max}
\theoremstyle{definition}
\newtheorem{example}{Example}[section]
\newtheorem{definition}[example]{Definition}
\theoremstyle{plain}
\newtheorem{lemma}[example]{Lemma}
\newtheorem{proposition}[example]{Proposition}
\newtheorem{mtheorem}{Theorem}
\newtheorem{conjecture}[example]{Conjecture}
\newtheorem{corollary}[example]{Corollary}
\theoremstyle{remark}
\newtheorem*{acknowledgements}{Acknowledgements}
\newtheorem{remark}[example]{Remark}
\newtheorem{convention}[example]{Convention}
\numberwithin{equation}{section}
\begin{document}
\title[Number of singular points]{Number of singular points of a genus $g$ curve with one point at
infinity}
\author{Maciej Borodzik}
\address{Institute of Mathematics, University of Warsaw, ul. Banacha 2,
02-097 Warsaw, Poland}
\email{mcboro@mimuw.edu.pl}
\date{\today}
\subjclass{primary: 14H50, secondary: 14H20, 32S50, 14E15}
\keywords{singular points, codimension, Zaidenberg--Lin conjecture}
\thanks{Supported by Polish KBN Grant No 2 P03A 010 22}
\begin{abstract} 
Using techniques developped in \cite{BZ0} and \cite{BZ2} we bound the maximal number $N$ of singular points
of a plane algebraic curve $C$ that has precisely one place at infinity with one branch in terms of
its first Betti number $b_1(C)$. Asymptotically we prove that $N<\sim\frac{17}{11}b_1(C)$ for large $b_1$.
In particular, in the case of curves with one place at infinity, 
we confirm the Zaidenberg and Lin conjecture stating that $N\le 2b_1+1$.
\end{abstract}
\maketitle

\section{Introduction}
\subsection{Presentation of results}
In \cite{BZ3} the authors have proved that if $C\subset\mathbb{C}^2$ is an algebraic curve homeomorphic
to $\mathbb{C}^*$ then $C$ may have at most three singular points at finite distance. This confirms 
the following conjecture, due to Zajdenberg and Lin
\begin{conjecture}[Zajdenberg--Lin conjecture, \cite{ZL}]
If $C\subset\mathbb{C}^2$ is a reduced, irreducible, algebraic
plane curve with first Betti number $b_1$, then $C$ has at most $2b_1+1$ singular points at finite distance.
\end{conjecture}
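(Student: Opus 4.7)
The plan is to reduce the counting problem to a sum of local singularity invariants, and then to bound that sum via global constraints coming from the one-place-at-infinity hypothesis and the first Betti number. Since the paper only claims the conjecture in the one-place-at-infinity class, I will describe the argument in that setting.

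First, following [BZ0], I would associate to every singular germ $p_i$ a positive integer $\codim(p_i)$, the codimension of the equisingular stratum through $p_i$ in the space of local deformations. The essential property is $\codim(p_i) \ge 1$ at every nonsmooth point, which gives the trivial but crucial bound
\[
N \;\le\; \sum_{i=1}^{N} \codim(p_i).
\]
Thus it suffices to prove the arithmetic inequality
\[
\sum_{i=1}^{N} \codim(p_i) \;\le\; 2 b_1(C) + 1.
\]

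Second, I would exploit the one-place-at-infinity hypothesis to write $b_1(C)$ cleanly in terms of genus and local data. Letting $\tilde C\to C$ be the normalization and $\delta_i$ the usual delta-invariants, the Euler-characteristic computation together with the fact that $\tilde C$ has a single point over infinity yields an expression for $b_1(C)$ in which $\sum\delta_i$ appears as the principal contribution. The one-place hypothesis rigidifies the behaviour at infinity: the branch at infinity is encoded by a single multiplicity/Puiseux sequence, and it produces the boundary term $+1$ on the right-hand side.

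Third, and this is the heart of the matter, I would apply the semigroup-of-values estimates from [BZ2] to pass from local codimensions to delta-invariants with the right constant, namely $\codim(p_i) \le 2\delta_i + (\text{correction})$, summed and compared against the rigid contribution at infinity. Combining this with the expression for $b_1$ from the previous step gives $\sum\codim(p_i)\le 2b_1(C)+1$.

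The main obstacle is clearly the third step: the per-singularity estimate $\codim \le 2\delta$ is not formal and relies on the detailed arithmetic of semigroups of plane curve singularities. Moreover, one must show that no configuration of singularities, when coupled with the unique place at infinity through the multiplicity sequence of the Puiseux pairs, can defeat the summed inequality. Once this combinatorial/arithmetic bound is secured, the Zajdenberg--Lin bound for one-place-at-infinity curves follows at once. The asymptotic refinement $N \lesssim \tfrac{17}{11} b_1$ stated in the abstract is presumably a strictly sharper averaging argument over the same semigroup data, improving the constant $2$ at the cost of an additive error; but even the weaker bound $2b_1+1$ requires the full strength of the semigroup inequality from [BZ2].
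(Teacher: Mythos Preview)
The statement is a conjecture, and the paper does not prove it in full; it confirms only the one-place-at-infinity case, via Theorems~1 and~2. You acknowledge this and outline what you believe the one-place argument to be, but your outline is not the paper's argument, and in fact your two main inequalities go in the wrong direction.

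First, the inequality $\sum_i \codim(p_i)\le 2b_1(C)+1$ is simply false. Take a rational curve with one place at infinity and a single cusp (so $b_1=0$); the codimension of that cusp can be made as large as you like by choosing higher Puiseux pairs. So the strategy ``bound $N$ by $\sum\codim$ and then bound $\sum\codim$ by $2b_1+1$'' cannot work. Second, the local estimate you invoke, $\codim(p_i)\le 2\delta_i+\text{correction}$, is backwards. What the paper actually proves (Proposition~\ref{newProp} and Lemma~\ref{bz0}) is an \emph{upper} bound on $2\delta_i$ in terms of codimension and multiplicity, namely $2\delta_i\le m_i(\en_i-m_i+r_i+1)$.

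The paper's argument runs in the opposite logical direction. One fixes the degree data $(p,q)$ coming from the pole orders at the unique place at infinity. The genus formula pins down $\mathcal D=\sum 2\delta_i+\mu'_\infty$ exactly as $(p-1)(q-1)-p'+1-2g$. One then introduces $\mathcal E=\sum m_i(\en_i-m_i+r_i+1)+p'\nu'_\infty$, so that the local inequalities force $\mathcal D\le\mathcal E$. The multiplicities are controlled by the divisor of $dx$ (inequality~\eqref{eq:multbound}), and the sum of codimensions is controlled by the Bogomolov--Miyaoka--Yau inequality applied to the log resolution (inequality~\eqref{eq:bmy3}), with the crucial excess term $\eta_i\ge\frac56$ for $A_k$-cusps. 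One then \emph{maximises} $\mathcal E$ over all admissible choices of $m_i,\en_i,p,q$ subject to these constraints and shows that if $N$ exceeds all of $I_a,\dots,I_f$ (resp.\ $J_a,\dots,J_f$), the maximum of $\mathcal E$ still falls below $\mathcal D$, a contradiction. The BMY step and the optimisation over $(p,q)$ are the engine of the proof; neither appears in your sketch.
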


In this article
we use essentially the same techniques as in \cite{BZ3} 
to study the number of singular points of curves of arbitrary genus (by genus we always mean geometric genus)
$g$ with one place at infinity. We confirm
Zajdenberg--Lin conjecture in this case, providing an assymptotically (as $b_1$ goes to infinity), better
estimates.

The condition for a curve to have one place at infinity, makes the computational part  much simpler than in 
the case of general curves. 
We are convinced that the methods presented in this paper can be applied to prove Zajdenberg--Lin conjecture in its full
generality. However the computations without additional assumptions seem to be rather lengty.

The methods developped in \cite{BZ0} and \cite{BZ2}
allow us to prove two following theorems, which are the core of this article.
Before we state them, let us agree on a convention that shall be used throughout the paper.
\begin{convention}
In the whole paper, unless specified otherwise, 
$C$ will denote an algebraic curve in $\CC P^2$ intersecting the line at infinity $L_\infty$
precisely at one point $z_\infty$, such that $C$ has one branch at $z_\infty$. The affine part $C\cap\CC^2$
will be denoted by $C^0$. When talking about genera (arithmetic and algebraic) of the curve, we will have always
in mind the genera of $C$. The first Betti numbers of $C$ and $C^0$ are equal (by Mayer--Viettoris sequence).
\end{convention}

\begin{mtheorem}
Let $C^0\subset\mathbb{C}^2$ be a plane algebraic curve with one place at infinity with $N$ singular points
at finite distance. Assume that all these points are cuspidal. Let $g>0$ denote the geometric genus of
the curve. Then it is impossible that all the following inequalities
hold at the same time:
\begin{subequations}
\begin{align}
N&\ge I_a(g):=2g+3\label{eq:MT1A}\\
N&\ge I_b(g):=\frac{24}{11}g+\frac{20}{11}\label{eq:MT1B}\\
N&>I_c(g):=2g+\frac23+\sqrt{\frac{20}{3}g+\frac{28}{9}}\label{eq:MT1C}\\
N&>I_d(g):=2g-\frac14+\sqrt{7g+\frac{177}{16}}\label{eq:MT1D}\\
N&>I_e(g):=\frac{36}{17}g+\frac{18}{17}\label{eq:MT1E}\\
N&>I_f(g):=\frac{29}{14}g+\frac{31}{28}+\label{eq:MT1F}\\
&+\sqrt{\frac{1}{196}g^2+\frac{1067}{196}g+\frac{793}{784}}.\notag
\end{align}
\end{subequations}
\end{mtheorem}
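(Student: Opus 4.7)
The plan is a proof by contradiction. Assume that $C$, $N$, $g$ are as in the statement and that all six inequalities \eqref{eq:MT1A}--\eqref{eq:MT1F} hold simultaneously; I will derive a contradiction from the codimension-style inequalities of \cite{BZ0,BZ2} combined with adjunction and B\'ezout's theorem at infinity. Set $d=\deg C$ and, for each affine cusp $p_i$, record the $\delta$-invariant $\delta_i$, the multiplicity $M_i$, and the refined invariants $\en(p_i)$ and $\ten(p_i)$ of \cite{BZ0,BZ2}; attach the analogous $\delta_\infty$, $M_\infty$, $\en_\infty$, $\ten_\infty$ to the unique branch at $z_\infty$. Since every singularity is unibranch, adjunction gives
\[
\binom{d-1}{2}=g+\delta_\infty+\sum_{i=1}^N\delta_i,
\]
and B\'ezout forces the local intersection number of the branch at $z_\infty$ with $L_\infty$ to equal $d$, rigidly tying $\delta_\infty$, $M_\infty$, and the semigroup at infinity to $d$.

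The engine of the proof is a pair of codimension-style global inequalities from \cite{BZ0,BZ2} of the schematic form
\[
\sum_{i=1}^N\en(p_i)+\en_\infty\le \Phi(d),\qquad \sum_{i=1}^N\ten(p_i)+\ten_\infty\le\widetilde{\Phi}(d),
\]
with explicit polynomials $\Phi,\widetilde{\Phi}$ of degree $2$ in $d$ arising from counting linear conditions imposed on curves of degree $d$ by the prescribed singular types and the prescribed branch at infinity. For each cusp one then exploits elementary numerical lower bounds of the shape $\en(p_i)\ge a\delta_i+bM_i+c$ and $\ten(p_i)\ge a'\delta_i+b'M_i+c'$. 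Feeding these into the global inequality and eliminating $d$ via adjunction yields, in each case, an upper bound on $N$ in terms of $g$; the six different choices of $(a,b,c),(a',b',c')$ produce precisely the negations of \eqref{eq:MT1A}--\eqref{eq:MT1F}. Consequently all six cannot hold simultaneously, and the theorem follows.

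The six choices split into three \emph{linear} options ($I_a$, $I_b$, $I_e$) employing fixed weights, and three \emph{quadratic} options ($I_c$, $I_d$, $I_f$) in which one weight is left free and then optimized over its admissible range; the square-roots appearing in $I_c$, $I_d$, $I_f$ are precisely the discriminants of the resulting quadratics in that free weight. The main technical obstacle is the branch at $z_\infty$: its invariants are not free parameters but are determined by its semigroup, and $M_\infty$ may range from small values up to its B\'ezout-constrained maximum. A careful case split according to the size of $M_\infty$, combined with the lower bounds on $\en$ and $\ten$ in terms of $\delta$ and $M$, is what makes the averaging work uniformly over both regimes. A small-degree (hence small-$g$) case check, ruling out degenerate configurations outside the reach of the global estimates, is the only portion of the argument that must be performed by direct inspection rather than by the general codimension machinery.
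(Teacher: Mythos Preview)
Your outline misidentifies both the source and the shape of the key global inequality, and it misses the two auxiliary parameters that actually drive the argument. The bound on $\sum_i\en_i$ used in the paper is \emph{not} a parameter-counting (``linear conditions'') estimate and is \emph{not} quadratic in $d$; it is the Bogomolov--Miyaoka--Yau inequality applied to the log resolution, giving
\[
\sum_{i=1}^N\en_i+\nu'_\infty\le p+q-2+4g-\sum_{i=1}^N\eta_i,
\]
which is linear in the pole orders $p,q$ of the parametrising functions (only $q=d$; the smaller pole order $p$ and $p'=\gcd(p,q)$ are essential separate variables that your plan never introduces). The local input is also not of the linear form $\en(p_i)\ge a\delta_i+bM_i+c$: it is $2\delta_i\le m_i(\en_i-m_i+2)$, whose nonlinearity in $m_i$ is exactly what forces the optimisation over multiplicities and produces the mixed terms. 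A second, independent constraint --- the order-of-zero count $\sum_i(m_i-1)\le p+2g-1$ --- is what pins down $p$ from below; without it you cannot eliminate $p$ and the argument collapses.

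Consequently the six inequalities do not arise as ``six choices of weights'': they appear \emph{sequentially} in a single optimisation of $\Delta=\D-\E$. One first decides whether the dominant term in $\E$ comes from a finite cusp or from $p'\nu'_\infty$ (this is the true case split, governed by $p'$, not by $M_\infty$). In the first case \eqref{eq:MT1A} makes $\partial\Delta/\partial q\ge 0$, \eqref{eq:MT1B} makes $\partial\Delta/\partial p\ge 0$, and \eqref{eq:MT1C} forces the resulting quadratic in $N$ to be positive. In the second case $\Delta$ is concave in $p'\in[3,p/2]$; at $p'=3$ one uses \eqref{eq:MT1D}, and at $p'=p/2$ one uses \eqref{eq:MT1E} for the sign of $\partial\Delta/\partial p$ and \eqref{eq:MT1F} for the final positivity. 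Your plan, as written, does not contain these mechanisms, and the ``schematic'' inequalities you state would not, even if made precise, reproduce the six bounds in the statement.
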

In other words we have
\begin{corollary}
Let $I(g)=\max(I_a(g),I_b(g),I_c(g),I_d(g),I_e(g),I_f(g))$. Then for any cuspidal curve $C^0$ with geometric genus $g$
and one place at infinity, the number of singular points at finite distance does not exceed $I(g)$.
\end{corollary}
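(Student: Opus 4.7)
The corollary is a direct logical consequence of the Theorem, obtained by a contrapositive argument. The plan is simply to observe that if $N$ were to exceed the maximum $I(g)$, then $N$ would exceed each of the six quantities $I_a(g),\ldots,I_f(g)$ individually, at which point the Theorem supplies the contradiction.

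In detail, suppose for contradiction that $C^0\subset\CC^2$ is a cuspidal curve with one place at infinity, geometric genus $g>0$, and $N>I(g)$ singular points at finite distance. Since $I(g)=\max(I_a(g),\ldots,I_f(g))$, one has $N>I_j(g)$ for every $j\in\{a,b,c,d,e,f\}$. In particular this yields the non-strict inequalities $N\ge I_a(g)$ and $N\ge I_b(g)$ required by \eqref{eq:MT1A} and \eqref{eq:MT1B}, together with the strict inequalities $N>I_c(g)$, $N>I_d(g)$, $N>I_e(g)$, $N>I_f(g)$ of \eqref{eq:MT1C}--\eqref{eq:MT1F}. All six hypotheses of the Theorem therefore hold simultaneously, which is impossible. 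Hence $N\le I(g)$.

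There is essentially no obstacle to overcome and no separate computation involved: the entire substance of the bound is already packaged in the Theorem, and this corollary merely repackages the six-fold condition into a single upper bound by taking the pointwise maximum. For completeness one could observe that since $N$ is an integer the stronger statement $N\le\intpart{I(g)}$ follows by the same argument, but the form given is the natural one in view of the mixture of strict and non-strict inequalities occurring in the six hypotheses; it is also the form best suited for the asymptotic comparison $N\lesssim\tfrac{17}{11}b_1(C)$ advertised in the abstract, which will be extracted from $I_e(g)$ in the subsequent sections.
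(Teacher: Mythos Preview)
Your argument is correct and matches the paper's treatment: the corollary is stated there as an immediate reformulation of Theorem~1 (introduced by ``In other words''), and the contrapositive you give is exactly the intended logical step.

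One minor correction to your closing aside, which is not part of the proof proper: the asymptotic $N\lesssim\tfrac{17}{11}b_1(C)$ in the abstract does not come from $I_e(g)$. For cuspidal curves the paper shows $I(g)=I_b(g)=\tfrac{24}{11}g+\tfrac{20}{11}$ for large $g$, giving $N\lesssim\tfrac{12}{11}b_1$ since $b_1=2g$; the $\tfrac{17}{11}$ coefficient arises instead from $J_b(g,R)=\tfrac{24}{11}g+\tfrac{17}{11}R+\tfrac{18}{11}$ in Theorem~2, where $b_1=2g+R$ and the $R$-term dominates.
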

Before we state the second theorem we need one technical definition.
\begin{definition}\label{NoBr}
Let $C\subset X$ be a projective curve.
The \emph{total number of branches} of $C$ is the sum
\begin{equation}\label{eq:defR}
R=\sum_{i\in I} (r_i-1),
\end{equation}
where $i$ goes through all singular points of $C$ and $r_i$ is the total number of branches of $C$ at the $i$-th
singular point. 
\end{definition}
\begin{mtheorem}
Let $C^0$ be a plane algebraic curve with one place at infinity and $N$ singular points at finite distance.
Let $g$ be the geometric genus. Moreover let $R>0$ be the total number of branches of $C$.
Then it is impossible that all the following inequalities hold at the same time: 
\begin{subequations}
\begin{align}
N&\ge J_a(g,R):=2g+3+R\label{eq:MT2A}\\
N&\ge J_b(g,R):=\frac{24}{11}g+\frac{17}{11}R+\frac{18}{11}\label{eq:MT2B}\\
N&>J_c(g,R):=2g+R+\frac23+\sqrt{\frac{20}{3}g+4R+\frac{22}{9}}\label{eq:MT2C}
\end{align}
\begin{align}
N&>J_d(g,R):=2g+R-\frac14+\sqrt{7g+5R+\frac{177}{16}}\label{eq:MT2D}\\
N&\ge J_e(g,R):=\frac{36}{17}g+\frac{23}{17}R+\frac{18}{17}\label{eq:MT2E}\\
N&>J_f(g,R):=\frac{29}{14}g+\frac{17}{14}R+\frac{31}{28}+\label{eq:MT2F}\\
&+\sqrt{\frac{(g+3R)^2}{196}+\frac{1067}{196}g+\frac{513}{196}R+\frac{793}{784}}\notag
\end{align}
\end{subequations}
\end{mtheorem}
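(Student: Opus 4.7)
The plan is to mirror the proof of the first main theorem step by step, generalising each estimate to allow multibranched singular points. The key observation is that passing from a unibranched to an $r$-branched singularity increases the relevant local invariants---codimension in the equisingular stratum of plane curves of degree $d$, and the spectrum numbers at the test values of $\alpha$ used in \cite{BZ2}---by additive corrections of the form $r-1$, up to controlled constants. Summing these corrections over the $N$ singular points yields an extra contribution proportional to $R=\sum_i(r_i-1)$, which is precisely the shape in which $R$ enters each of the bounds $J_a,\dots,J_f$.

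Concretely, the first step is to set up the framework of \cite{BZ0} and \cite{BZ2}: write $d=\deg C$, use the one-place-at-infinity assumption to fix the semigroup of the unique branch at $z_\infty$, and invoke the genus formula $g=\binom{d-1}{2}-\sum_i\delta_i$ together with the codimension estimate of \cite{BZ0}, in which each multibranched point now contributes the extra summand $r_i-1$. For each of the six labels (A)--(F) the argument then reproduces the same linear combination of (i) the codimension bound, (ii) a Bogomolov--Miyaoka--Yau type spectrum inequality at a specific value of $\alpha$, and (iii) the degree/adjunction constraint that was used to derive the cuspidal bound $I_\ast(g)$, but now carrying the multibranched corrections through. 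Collecting the $R$-terms at the end produces $J_\ast(g,R)$.

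The main technical obstacle is obtaining the precise numerical coefficients in front of $R$ in $J_b$, $J_e$ and $J_f$ (namely $17/11$, $23/17$ and $17/14$), and verifying that the quadratic elimination used for $J_c$, $J_d$ and $J_f$ produces the radicands stated. These fractions arise because the spectrum and codimension inequalities combine non-symmetrically with the degree constraint, and each branch beyond the first feeds into both simultaneously. The cleanest bookkeeping is to treat $R$ as a free non-negative parameter from the outset, re-optimise each of the six arithmetic combinations used in the cuspidal proof, and verify that specialising to $R=0$ recovers bounds consistent with $I_a,\dots,I_f$---up to the small constant shifts that one should expect, since \cite{BZ2} yields slightly sharper inequalities in the purely cuspidal case (visible, e.g., by comparing $J_b(g,0)$ with $I_b(g)$).
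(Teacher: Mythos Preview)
Your outline is too schematic to count as a proof, and it misidentifies the main tools. Two concrete gaps:

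\smallskip
\textbf{Wrong mechanism for the BMY input.} You speak of ``spectrum numbers at test values of $\alpha$'' and a ``BMY type spectrum inequality at a specific value of $\alpha$''. The paper uses no spectrum at all. The BMY inequality enters as $(K+D)^2\le 3\chi(X\setminus D)$ on the minimal log resolution, which after localising via the orthogonal decomposition of $\Pic X\otimes\mathbb{Q}$ becomes the single global bound
\[
\sum_{i=1}^N \en_i+\nu'_\infty\le p+q-2+4g+R-\sum_{i=1}^N\eta_i,
\]
with the excesses $\eta_i$ bounded below by $\tfrac12$ (cuspidal) or $\tfrac56$ (multiplicity $2$). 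There is no family of inequalities indexed by $\alpha$; the six bounds $J_a,\dots,J_f$ all flow from this one inequality combined with the multiplicity bound $\sum_i(m_i-r_i)\le p+2g-1$ and the genus formula.

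\smallskip
\textbf{Missing the actual argument structure.} The proof is not ``the same linear combination as in the cuspidal case with $R$-corrections carried through''. It is an explicit constrained optimisation: one writes $\Delta=\D-\E$ with
\[
\E=\sum_i m_i(\en_i-m_i+r_i+1)+p'\nu'_\infty,
\]
and shows that if all six inequalities hold then $\Delta>0$, contradicting $\Delta\le 0$. The key local input you do not mention is the generalised bound $2\delta\le m(\en-m+r+1)$ for an $r$-branched germ (Proposition~\ref{newProp}), which is new and requires its own inductive proof. One then maximises $\E$ over all admissible configurations, splitting into two cases according to whether a finite singular point or the term $p'\nu'_\infty$ dominates. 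In each case there is a chain of reductions (Proposition~\ref{skomplikowane}: put $s=0$, force $B=A$, reduce to $A=2$, $k_i=0$ for $i\ge 3$, etc.) that collapses the problem to a function of $p,q,g,R,N$ alone. The six inequalities $J_a,\dots,J_f$ are exactly the conditions needed at successive stages so that each partial derivative (in $q$, then $p$, then $p'$) has the right sign. None of this is visible in your sketch, and without it the specific coefficients $\tfrac{17}{11},\tfrac{23}{17},\tfrac{17}{14}$ have no derivation.
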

\begin{corollary}
Let $J(g,R)=\max(J_a(g,R),J_b(g,R),J_c(g,R)$, $J_d(g,R)$, $J_e(g,R)$, $J_f(g,R))$. 
Then for any cuspidal curve $C^0$ with geometric genus $g$
and one place at infinity, the number of singular points at finite distance does not exceed $J(g,R)$.
\end{corollary}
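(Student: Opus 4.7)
\emph{Plan.} I would argue by contradiction, mirroring the proof of Theorem 1 while tracking the extra contributions coming from multi-branch singular points. Suppose $C$ satisfies all six displayed inequalities $J_a,\dots,J_f$ simultaneously; we derive a contradiction from each one in turn.

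\emph{Setup.} Let $p_1,\dots,p_N$ be the singular points of $C^0$; write $r_i$ for the number of local branches and $\mu_i$ for the Milnor number at $p_i$, so that $R=\sum_i(r_i-1)$. The Pl\"ucker-type identity
\[
\sum_{i=1}^N \mu_i + R = 2g_a - 2g,
\]
where $g_a$ denotes the arithmetic genus of $C$, is the bridge between the local and global invariants. At each $p_i$ we use the extremal invariants $\ext\nu(p_i)$ and $\ext\tilde\nu(p_i)$ of \cite{BZ0, BZ2}; the crucial point is that the six lower bounds on these invariants that underlie the six estimates in Theorem 1 acquire, in the multi-branch case, correction terms that are linear in $r_i-1$.

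\emph{Main argument.} For each of the six codimension/Bezout arguments from Theorem 1 I would re-run the calculation, carrying the corrected local bounds through. Summation over $i$ converts the collected $r_i-1$ terms into the coefficients of $R$ visible in $J_b,\dots,J_f$, and an analogous adjustment under the square root produces the explicit $R$-dependence of the radicals in $J_c$, $J_d$, $J_f$. The first bound $J_a$ is the Matsuoka--Sakai-type estimate adapted to one place at infinity; the $+R$ term there comes directly from the genus identity displayed above. Combined with the codimension obstruction at $z_\infty$ (which is simple because $C$ is unibranch there), each of the six arguments yields an inequality of the form $N\le J_\bullet(g,R)$ or $N<J_\bullet(g,R)$, contradicting the corresponding line of the hypothesis.

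\emph{Main obstacle.} The most delicate bookkeeping occurs for $J_f$, where $R$ enters the estimate not only linearly outside the radical but also through the completed square $(g+3R)^2/196$ inside it. One must check that the quadratic $R$-contribution of the underlying local bounds, together with the cross term $gR$, assembles into precisely this expression with the correct remaining constants. A useful consistency check at every stage is that setting $R=0$ must reproduce exactly the $I_\bullet$ of Theorem 1; the analogous specializations for $J_b,\dots,J_e$ are tedious but essentially linear in $R$, so the real technical content is hidden in $J_f$.
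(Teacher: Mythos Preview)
Your proposal misreads the logical structure of the argument. The corollary itself is a one-line consequence of Theorem~2: if $N>J(g,R)$ then $N>J_\bullet(g,R)$ for every $\bullet\in\{a,\dots,f\}$, so all six inequalities of Theorem~2 hold simultaneously, contradiction. The real content is Theorem~2, and that is what you are attempting to sketch.

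But your sketch of Theorem~2 is built on a wrong picture. You treat the six bounds $J_a,\dots,J_f$ as coming from six \emph{independent} ``codimension/B\'ezout arguments'', each of which on its own would force $N\le J_\bullet$. That is not how the paper works, and I do not see how it could work: none of the six inequalities by itself suffices. The paper runs a \emph{single} optimisation. One introduces $\mathcal D=(p-1)(q-1)-p'+1-2g$ and $\mathcal E$ as in \eqref{eq:mathcalE}, sets $\Delta=\mathcal D-\mathcal E$, and shows $\Delta>0$ under the joint hypothesis. There are two cases (a finite singular point dominates, or the $p'\nu'_\infty$ term dominates); in each case one first maximises $\mathcal E$ over the combinatorial parameters $m_i,\en_i,r_i$ subject to \eqref{eq:multbound} and \eqref{eq:bmy3} (this is Proposition~\ref{skomplikowane} with its chain of lemmas reducing to $s=0$, $A=2$, $k_i=0$ for $i\ge3$), and then studies $\Delta$ as a function of $q$, then $p$, then $p'$. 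The six hypotheses enter \emph{sequentially}: \eqref{eq:MT2A} forces $\partial\Delta/\partial q\ge0$; \eqref{eq:MT2B} forces the $p$-coefficient positive; \eqref{eq:MT2C} makes the resulting expression positive at the minimal $p$; and \eqref{eq:MT2D}--\eqref{eq:MT2F} play the analogous roles in the $p'$-dominating case at $p'=3$ and $p'=p/2$. Remove any one of the six and the chain breaks.

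Your ingredients are also off: there is no Matsuoka--Sakai-type step here, the identity you display with $g_a$ is not the one used (the paper needs the refined decomposition of the Milnor number at infinity from Lemma~\ref{Mil1}, not just the genus formula), and the whole optimisation apparatus over $r,s,k_j,A,B$ and the two-case split on $p'$ versus $\max m_i$ is absent from your plan.
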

\begin{remark}
The case $g=0$ from Theorem 1 results from the famous Zaidenberg--Lin theorem. Any curve homeomorphic to a disk can have
at most one singular point at finite distance. For $g=0$ and $R=1$ there is \cite{BZ0}
a conjectural classification of all such curves. All found cases have at most three singular points.
\end{remark}

The structure of this article is the following. We end this section with a discussion of the behaviour of quantities $I(g)$
and $J(g,R)$.
In fact, we have formulated the theorems above to make the proof as transparent as possible. It remains to show
how all these inequalities are related each to other. This gap we shall fill in Section~\ref{S12}.

In Section~\ref{BI} we recall definitions of some non classical invariants of planar singular points as a codimension. We
prove several bounds relating multiplicities, $\delta$-invariants and codimensions of singular points to the genus,
degree and the total number of branches of the curve $C$. We turn the attention of the reader to Proposition~\ref{newProp} which
is an important generalisation of known inequalities and is of interest on its own.

Section~\ref{BI} closes with a set of inequalities from which we will deduce
Theorems 1 and 2.

The proof of Theorem 1 is contained in Section~\ref{Sec3}. It is considerably simpler than the proof of Theorem 2, which
is comprised in Section~\ref{Sec4}.

Altough these proofs are highly technical, it is worth to mention that the method, which informally can be resumed as 
``write down all the inequalities and see what happens'', proves very efficient in the study of affine algebraic curves.

\subsection{Inequalities in Theorems 1 and 2}\label{S12}
First let us do the easy case of quantities $I_a(g),\dots I_f(g)$. We see that the leading term
in $g$ is $\frac{24}{11}g$ and appears in $I_b(g)$. Therefore, asymptotically, $I(g)=I_b(g)$.

We observe that always $I_e(g)<I_b(g)$. For $g>0$, the inequality \eqref{eq:MT1A}
is not stronger than \eqref{eq:MT1B}. In fact, we have $I_a(g)\le I_b(g)$ for $g\ge 4$,
but for $g=1,2,3$ the both inequalities are equivalent, as $N$ is assumed to be an integer.

Let us write $L_b=\frac{24}{11}g+\frac{20}{11}$, $L_c=2g+\frac23$, $L_d=2g-\frac14$, $L_f=\frac{29}{14}g+\frac{31}{28}$.
Moreover, let $R_c=\frac{20}{3}g+\frac{28}{9}$, $R_d=7g+\frac{177}{16}$ and 
$R_f=\frac{g^2}{196}+\frac{1067}{196}g+\frac{793}{784}$.
\begin{lemma}
We have $I(g)=I_b(g)$, if $g\ge 747$.
\end{lemma}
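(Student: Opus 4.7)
The plan is as follows. By the discussion immediately preceding the lemma, $I_e(g) < I_b(g)$ for every $g\ge 0$ and $I_a(g) \le I_b(g)$ for $g \ge 4$, so all that remains is to verify the three inequalities $I_b(g) \ge I_c(g)$, $I_b(g) \ge I_d(g)$ and $I_b(g) \ge I_f(g)$ on the range $g \ge 747$.

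Each of these has the shape $\alpha g + \beta \ge \sqrt{A g^2 + B g + C}$ with $\alpha > 0$ and with the left-hand side positive for $g \ge 747$, so squaring is legitimate and reduces the comparison to a quadratic inequality in $g$. For $I_b \ge I_c$ and $I_b \ge I_d$ the leading coefficient $\tfrac{24}{11}$ of $I_b$ strictly exceeds the leading coefficient $2$ appearing in $I_c$ and $I_d$, so the resulting quadratic has a comfortably positive $g^2$-coefficient. A short computation shows that its positive root is, in both cases, well under $200$, so each inequality holds throughout the range $g \ge 747$; these steps are routine arithmetic and I would not belabour them.

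The heart of the proof is the comparison $I_b \ge I_f$. The leading coefficient of $I_f$ equals $\tfrac{29}{14} + \tfrac{1}{14} = \tfrac{15}{7}$, and its gap to $\tfrac{24}{11}$ is only $\tfrac{24}{11} - \tfrac{15}{7} = \tfrac{3}{77}$, which forces the cutoff to be much larger. Rearranging $I_b(g) \ge I_f(g)$ yields
\[
\tfrac{17}{154}\, g + \tfrac{219}{308} \;\ge\; \sqrt{\tfrac{g^2}{196} + \tfrac{1067}{196}\, g + \tfrac{793}{784}}.
\]
After clearing denominators (multiplying through by $308$) and squaring, a direct calculation simplifies the comparison to $84\, g^2 - 62692\, g - 5999 \ge 0$. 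The positive root of the associated quadratic is approximately $746.4$, so the inequality is equivalent to $g \ge 747$; substituting $g = 746$ into the quadratic gives a negative value, confirming that the bound in the lemma is sharp.

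The only real obstacle is bookkeeping accuracy in the $I_f$-comparison: because the two slopes differ by a mere $\tfrac{3}{77}$, the cutoff is substantially larger than in the $I_c$ or $I_d$ comparisons, and the constants have to be carried through precisely in order to rule out an off-by-one error and to verify that $747$, rather than $748$ or $746$, is the sharp threshold.
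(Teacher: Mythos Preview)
Your proof is correct and follows essentially the same approach as the paper: both reduce the comparison $I_b(g)\ge I_f(g)$ to showing that the quadratic $(L_b-L_f)^2-R_f$ is nonnegative, locate its larger root near $746.4$, and dismiss the comparisons with $I_c$ and $I_d$ as routine (the paper simply writes ``the other inequalities are proven similarly''). Your integer-coefficient form $84g^2-62692g-5999\ge 0$ is just a rescaling of the paper's $\frac{6}{847}\bigl(g^2-\tfrac{2239}{3}g-\tfrac{875}{12}\bigr)$, up to an inconsequential discrepancy in the constant term that does not affect the location of the root.
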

\begin{proof}
We shall prove only that for $g\ge 747$, $L_b=I_b(g)\ge L_f+\sqrt{R_f}=I_f(g)$. We have
\[(L_b-L_f)^2-R_f=\frac{6}{847}(g^2-\frac{2239}{3}g-\frac{875}{12}).\]
The polynomial on the right hand side has two roots
$x=\frac{2239\pm \sqrt{5015746}}{6}$. The larger one is approximately $746.4$. The other inequalities are
proven similarly.
\end{proof}
For $g<747$ we can bound $I(g)$ for
example by
\[I(g)\le 3g+\frac32,\,\,\, 2.4g+6,\,\,\,2.2g+20.\]
These bounds have been found experimentally using computer. They are not optimal, but linear in $g$ and
therefore much easier to handle. In particular, as the first Betti number $b_1(C^0)$
is equal to $2g$, we prove
\begin{corollary}
A curve $C^0$ of geometric genus $g>1$ without self--in\-ter\-sec\-tions at finite distance and 
having one point at infinity cannot have
more than $2b_1(C^0)+1$ singular points. 
\end{corollary}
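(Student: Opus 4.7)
The plan is to read off this corollary from the upper bound $N\le I(g)$ provided by the Corollary following Theorem~1, combined with the linear estimates on $I(g)$ established just above. Since by hypothesis $C^0$ has no self-intersections at finite distance, every singular point of $C^0$ is unibranched, hence cuspidal in the sense of Theorem~1; accordingly Theorem~1 and its corollary both apply and yield $N\le I(g)$. Combined with the Convention, which gives $b_1(C^0)=2g$, the target bound $N\le 2b_1(C^0)+1$ reduces to the purely numerical claim
\[
I(g)\le 4g+1\qquad\text{for every integer }g\ge 2.
\]

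My plan is then to verify this inequality by splitting the range of $g$ into two regimes dictated by the preceding lemma. For $g\ge 747$ that lemma identifies $I(g)$ with $I_b(g)=\tfrac{24}{11}g+\tfrac{20}{11}$, and the required inequality $\tfrac{24}{11}g+\tfrac{20}{11}\le 4g+1$ reduces to $g\ge\tfrac{9}{20}$, which is trivial. For the intermediate range $2\le g\le 746$ I would invoke the experimental linear majorants $I(g)\le 3g+\tfrac32,\ 2.4g+6,\ 2.2g+20$ quoted just above the corollary, selecting on each subrange whichever bound is available. The cleanest choice is $2.4g+6$, and $2.4g+6\le 4g+1$ is equivalent to $g\ge\tfrac{5}{1.6}$, so this handles every integer $g\ge 4$. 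The two remaining values $g=2,3$ can be disposed of by direct substitution into the explicit expressions \eqref{eq:MT1A}--\eqref{eq:MT1F} and appeal to the integrality of $N$; in both cases $\lfloor I(g)\rfloor\le 4g+1$.

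The main obstacle is not this closing arithmetic, which is essentially book-keeping, but rather the rigorous supply of the linear majorants on $I(g)$ in the range $g<747$. Establishing them requires a systematic pairwise comparison of the six functions $I_a(g),\ldots,I_f(g)$ in the spirit of (and extending) the lemma above, a finite but unpleasant task which the text openly acknowledges to have been carried out with computer assistance. Once those inputs are accepted, the deduction of the corollary becomes the routine combination described in the previous paragraph.
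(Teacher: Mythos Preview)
Your proposal is correct and follows essentially the same route as the paper. The paper's own argument is even terser: it simply records the experimentally found linear majorants $I(g)\le 3g+\tfrac32,\ 2.4g+6,\ 2.2g+20$ for $g<747$, notes $b_1(C^0)=2g$, and declares the corollary proved. Your version is slightly more explicit in isolating the residual cases $g=2,3$ and in flagging that the linear majorants rest on a computer check rather than a displayed argument; both points are fair, and your handling of them is sound.
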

We confirm the Zaidenberg--Lin conjecture in this case.
As for the case $g=1$, the inequality \eqref{eq:MT1D} does not hold for $N=6$, so curves with
six singular points are apparently allowed. See Remark~\ref{ong=1} for detailed discussion.

Things are obviously more complicated in the case $R>0$. Again the quantity $J_b(g,R)$ is assymptotically
the largest both in $g$ and $R$ so a curve with large arithmetic genus $g+R$ 
cannot have more than $\frac{24}{11}g+\frac{17}{11}R+\frac{18}{11}$
singular points, whereas the first Betti number is equal to $2g+R$. However for curves of smaller genus, the quantities
as $J_d(g,R)$ or $J_f(g,R)$ may be larger.
\begin{lemma}
For $g\ge 752-3R$ we have $J(g,R)=J_b(g,R)$.
\end{lemma}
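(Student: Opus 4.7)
The strategy is identical in spirit to the previous lemma: for each $i\in\{a,c,d,e,f\}$ we show that $J_b(g,R)\ge J_i(g,R)$ as soon as $g\ge 752-3R$, and the threshold $752-3R$ will come from the tightest of these five comparisons.

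First I would dispose of the two purely linear comparisons. For $J_e$, a direct computation gives
\[
J_b(g,R)-J_e(g,R)=\Bigl(\tfrac{24}{11}-\tfrac{36}{17}\Bigr)g+\Bigl(\tfrac{17}{11}-\tfrac{23}{17}\Bigr)R+\Bigl(\tfrac{18}{11}-\tfrac{18}{17}\Bigr),
\]
and each bracket is positive, so $J_b>J_e$ unconditionally. For $J_a$, the difference $J_b-J_a=\tfrac{2}{11}g+\tfrac{6}{11}R-\tfrac{15}{11}$ is nonnegative for $g+3R\ge 15/2$, which is far weaker than the hypothesis $g\ge 752-3R$.

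For the three comparisons involving square roots I write $J_i=L_i+\sqrt{R_i}$ with the notation already introduced in the proof of the previous lemma, extended in the obvious way to the variables $(g,R)$ (so $L_c=2g+R+\tfrac23$, $R_c=\tfrac{20}{3}g+4R+\tfrac{22}{9}$, etc., and $L_b=J_b$). After checking that $L_b-L_i\ge 0$ under the hypothesis, which in each case is a trivial linear inequality in $g$ and $R$, the desired inequality $L_b\ge L_i+\sqrt{R_i}$ becomes equivalent to
\[
(L_b-L_i)^2-R_i\ge 0.
\]
Each left-hand side is a polynomial in $g$ and $R$ of total degree $2$; writing it out and collecting terms yields a quadratic form in $g$ and $R$ plus lower order terms. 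I expect the comparisons with $J_c$ and $J_d$ to be strictly easier, because their $R_i$ are linear in $(g,R)$, while $R_f$ already contains the quadratic piece $(g+3R)^2/196$, which exactly matches the quadratic piece of $(L_b-L_f)^2$ — this is precisely why the $J_f$ inequality is the binding one and why the answer is phrased in terms of $g+3R$ rather than $g$ alone.

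Therefore the main obstacle — and the step that pins down the constant $752$ — is the comparison with $J_f$. Concretely I would expand $(L_b-L_f)^2-R_f$ and check that after the top-order $(g+3R)^2$ contributions cancel, what remains is a linear function $\alpha(g+3R)+\beta$ (with $\alpha,\beta$ explicit rational constants); the threshold $g+3R\ge 752$ then arises as the exact solution of $\alpha(g+3R)+\beta=0$, in complete parallel with the computation in the proof of the previous lemma, where the root $746.4$ determined the bound $g\ge 747$. The other two square-root comparisons will hold on a strictly smaller region and therefore impose no extra condition.
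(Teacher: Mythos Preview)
Your overall strategy matches the paper's: reduce to the single comparison $J_b\ge J_f$, and study the sign of $(L_b-L_f)^2-R_f$. However, a key step in your plan is wrong and would not go through as written. You claim that the quadratic piece $(g+3R)^2/196$ in $R_f$ ``exactly matches'' the quadratic piece of $(L_b-L_f)^2$, so that only a linear function of $g+3R$ survives. This is not the case: one has
\[
L_b-L_f=\frac{17}{154}(g+3R)+\frac{163}{308},
\]
so the $(g+3R)^2$ coefficient of $(L_b-L_f)^2$ is $\dfrac{17^2}{154^2}=\dfrac{289}{23716}$, while that of $R_f$ is $\dfrac{1}{196}=\dfrac{121}{23716}$. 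They do \emph{not} cancel; the difference is $\dfrac{168}{23716}=\dfrac{6}{847}$, exactly the coefficient you would expect by analogy with the previous lemma.

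Consequently the paper does not solve a linear equation to get $752$. After clearing denominators and completing the square in the variable $g+3R$, the paper obtains
\[
(g+3R-376)^2\ge 141479.25-1936R,
\]
which still depends on $R$ separately, not only through $g+3R$. One then needs the extra hypothesis $R\ge 1$ (built into Theorem~2) to bound the right-hand side by $139543.25$, while $g+3R\ge 752$ gives $(g+3R-376)^2\ge 376^2=141376$, and the two bounds combine to finish. So the constant $752$ is $2\cdot 376$ from the completed square, and the argument genuinely uses $R\ge 1$; your plan omits this ingredient and therefore would not close even with the corrected quadratic.
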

\begin{proof}
We shall only prove that $J_b(g,R)\ge J_f(g,R)$. This amounts to the fact that
\begin{multline*}
\left(\frac{24}{11}g+\frac{17}{11}R+\frac{18}{11}-\frac{29}{14}g-\frac{17}{14}R-\frac{31}{28}\right)^2\ge\\
\ge\frac{(g+3R)^2}{196}+\frac{1067}{196}g+\frac{513}{196}R+\frac{793}{784}.
\end{multline*}
After straighforward transformations we get
\[
(g+3R-376)^2\ge 141479.25-1936R.
\]
But if $g+3R\ge 752$, the right hand side at least $141376$. As $R\ge 1$ this is larger than the left hand side.
\end{proof}
Using a simple computer program we can check, that for all integer values $g\ge 0$ and $R\ge 1$ such that $g+3R\le 752$
(there are only finitely many of them), 
with exception of $g=0,R=1$ and $g=0,R=2$,
we have $[J(g,R)]\le 4g+2R+1$, where $[\cdot]$ denote the integer part. This confirms the Zaidenberg--Lin conjecture
in these cases. As for the case $g=0,R\le 2$, see Remark~\ref{ong=0}.

\section{Basic inequalities}~\label{BI}

\subsection{Bounding multiplicities}\label{ss1}
Let $z_1,\dots,z_N$ be singular points of a planar algebraic curve $C$. Let $z_\infty\subset\CC P^2$ be
the point of $C$ at infinity. In the whole paper we assume that $C$ has only one place at infinity and precisely
one branch at that point.

Let $m_i$ be the multiplicity of $z_i$, $\delta_i$ the $\delta$-invariant of $z_i$ and $r_i$ the number
of branches of $C$ at $z_i$.

Let $\nu:\Sigma\to C$ be the normalisation map. The composition $\Sigma\to C\hookrightarrow\mathbb{C}^2$
is given by two meromorphic functions $x$ and $y$. $x$ and $y$ have precisely one pole, let us call it 
$t_\infty$. Let $p$ and $q$ be orders of poles $x$ and $y$ respectively at $t_\infty$. We can always assume that
$p<q$ and $p\!\!\!\not|q$. Otherwise, if $p|q$ we apply a de 
Jonqui\`ere automorphism $y\to y-\text{const}\cdot x^{q/p}$ and reduce
the order of $y$. Note that $q$ is the degree of $C$.

Now for $i=1,\dots,N$, let $t_{i1},\dots,t_{ir_i}$ be the inverse images of $z_i$ under $\nu$. If $x$ has
order $n_{ij}$ at the point $t_{ij}$, i.e. $x(t)=x(t_{ij})+O((t-t_{ij})^{n_{ij}})$, then
the multiplicity of $C$ at $z_i$ is at least $\sum_{j=1}^{r_i}n_{ij}$. Therefore
\[\sum_{j=1}^{r_i}\ord_{t_{ij}}dx\ge m_i-r_i,\]
where $\ord$ denotes the order of zero of the meromorphic form $dx$. But $dx$ has only one pole, at $t_\infty$, and the
order of this pole is $p+1$. Using standard arguments from geometry of curves we infer that
\begin{equation}
\sum_{i=1}^N(m_i-r_i)\le p+2g-1.
\label{eq:multbound}
\end{equation}
This is the simplest bound we are going to use. Nevertheless it proves very important. At the beginning we will use it
to bound the multiplicities of the singular points. But later on, we shall also bound $p$ using this inequality.

\subsection{Bounding Milnor numbers}\label{BoMiNu}
To obtain a second bound we have to recall the following lemma from \cite[Proposition 2.11]{BZ0}
\begin{lemma}\label{Mil1}
Let us be given a germ of a planar cuspidal singularity $(A,0)$ parametrised locally by 
\begin{equation}\label{eq:form1}
x=t^n+\dots,\,\,\,y=t^m+\dots.
\end{equation}
Then the Milnor number of number $\mu(A)$ of this singularity can be written as
\begin{equation}
\label{eq:Milnor1}
\mu(A)=(n-1)(m-1)+n'-1+\mu'(A),
\end{equation}
where $n'=\gcd(n,m)$ $\mu'(A)\ge 0$. Moreover, 
if $x$ and $y$ are generic in the space of all convergent series of the form \eqref{eq:form1}
then $\mu(A)=0$.
\end{lemma}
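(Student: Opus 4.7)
The natural approach uses the fact that, for a planar germ with a single branch, the Milnor number is related to the $\delta$-invariant by $\mu(A)=2\delta(A)$, so the task reduces to a lower bound on $\delta(A)$. Reparametrizing $t$ via the unique series $s=t\cdot(1+O(t))^{1/n}$, one may assume $x=t^n$ exactly, while $y$ retains the form $t^m+O(t^{m+1})$. The normalization then realizes $\mathcal{O}_{A,0}$ as a subring of $\mathbb{C}\{t\}$, and $\delta(A)=\dim_{\mathbb{C}}\mathbb{C}\{t\}/\mathcal{O}_{A,0}$ equals the number of gaps of the value semigroup
\[
\Gamma=\{\ord_t z(t):z\in\mathcal{O}_{A,0}\}\subset\mathbb{N}_0,
\]
which contains $\langle n,m\rangle$ and is cofinite in $\mathbb{N}_0$ by cuspidality.

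The argument then splits on $n'=\gcd(n,m)$. If $n'=1$, the classical Sylvester--Frobenius formula gives that $\langle n,m\rangle$ has exactly $(n-1)(m-1)/2$ gaps, hence $\delta\ge(n-1)(m-1)/2$, giving $\mu\ge(n-1)(m-1)$; the excess $\mu'(A):=\mu(A)-(n-1)(m-1)$ is non-negative and vanishes in the generic case $\Gamma=\langle n,m\rangle$. If $n'>1$, the subsemigroup $\langle n,m\rangle$ lies in $n'\mathbb{N}_0$ and is not cofinite, so $\Gamma$ must contain some element $k$ with $n'\nmid k$. A coset-by-coset analysis modulo $n$, comparing the gaps of $\Gamma$ with those of the $n'=1$ model, shows that any such augmentation contributes at least an additional $n'-1$ to $\mu=2\delta$ beyond the $(n-1)(m-1)$ bound, so $\mu\ge(n-1)(m-1)+n'-1$. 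Any surplus is again absorbed into $\mu'(A)\ge 0$, which vanishes generically.

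The principal obstacle is the precise gap count in the $n'>1$ case: one must show that, no matter how the extra element $k$ with $n'\nmid k$ is chosen, the resulting gap deficit beyond $\langle n,m\rangle$ produces at least the stated $n'-1$ contribution, and that the minimum is actually attained by a generic $y$. This reduces to a Sylvester-type computation inside $\mathbb{Z}/n'\mathbb{Z}$, elementary in principle but requiring careful bookkeeping to exclude the possibility that a single extra generator fortuitously fills several cosets modulo $n$ at once and thereby beats the sharp constant $n'-1$.
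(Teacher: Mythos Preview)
The paper does not prove this lemma; it is quoted from \cite[Proposition~2.11]{BZ0}, so there is no in-paper argument to compare against directly. Your semigroup strategy is natural, but the central implication runs in the wrong direction. From $\Gamma\supseteq\langle n,m\rangle$ and the Sylvester count of gaps of $\langle n,m\rangle$ you conclude ``hence $\delta\ge(n-1)(m-1)/2$''. But a \emph{larger} semigroup has \emph{fewer} gaps, so the containment yields only $\delta\le(n-1)(m-1)/2$. When $n'=1$ one in fact has $\Gamma=\langle n,m\rangle$ exactly (there is a single characteristic exponent), so $\mu=(n-1)(m-1)$ does hold---but not by the argument you give. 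The same reversal infects the $n'>1$ discussion: adjoining an element $k$ with $n'\nmid k$ to $\langle n,m\rangle$ \emph{removes} gaps and hence \emph{lowers} $2\delta$; it cannot ``contribute at least an additional $n'-1$''. Knowing merely that $n,m\in\Gamma$ produces upper bounds on $\delta$, never lower bounds; to bound $\delta$ from below one must use that $\Gamma$ is the semigroup of a \emph{plane branch}, not an arbitrary numerical semigroup containing $n$ and $m$.

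The standard route is via the characteristic Puiseux exponents. Assuming (by symmetry of the target formula) $n\le m$ and $n\nmid m$, normalise to $x=t^{n}$ and let $m=\beta_1<\beta_2<\cdots<\beta_g$ be the characteristic exponents of $y$, with $e_i=\gcd(n,\beta_1,\dots,\beta_i)$, so that $e_0=n$, $e_1=n'$, $e_g=1$. The conductor formula for an irreducible plane-curve germ reads
\[
\mu=\sum_{i=1}^{g}(e_{i-1}-e_i)\beta_i - n + 1.
\]
Since $\beta_i\ge m+1$ for $i\ge 2$ and $\sum_{i\ge2}(e_{i-1}-e_i)=e_1-e_g=n'-1$, one obtains
\[
\mu\ge (n-n')m+(n'-1)(m+1)-n+1=(n-1)(m-1)+n'-1,
\]
with equality precisely when $g\le 2$ and $\beta_2=m+1$, i.e.\ in the generic case. (If $n\mid m$ then $n'=n$, $\beta_1\ge m+1$, and the same telescoping still gives $\mu\ge(n-1)m=(n-1)(m-1)+n'-1$.) This structural input---equivalently, the classification of plane-branch semigroups---is exactly what your sketch is missing.
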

Let us apply this lemma to the singularity of $C$ at infinity. We obtain the fact that the Milnor number at infinity
is equal to $(q-1)(q-p-1)+p'-1+\mu'_\infty$, where $p'=\gcd(p,q)$. Here we use essentially the assumption that
$C$ has precisely one place at infinity. As the singularity at infinity is unibranched, its Milnor number is twice
the $\delta$-invariant of the singularity. We recall that the $\delta$-invariant measures the number of double points
hidden at a given singular point (see \cite{BZ0}).

Recall that $C$ is a curve of degree $q$ and geometric genus $g$. By genus formula
\[(q-1)(q-2)-\sum_{i=1}^N 2\delta_i-2\delta_\infty=2g.\]
We sum up all the $\delta-$invariants at finite distance and the $\delta-$invariant at infinity. By Lemma~\ref{Mil1}
we obtain
\begin{equation}\label{eq:Milnor2}
\sum_{i=1}^N2\delta_i+\mu'_\infty=(p-1)(q-1)-p'+1-2g\stackrel{def}{=}\D.
\end{equation}
The quantity $\D$ will be called the \emph{number of double points at finite distance}. A generic curve $C$ (i.e. where
$x(t)$ and $y(t)$ are generic meromorphic functions with prescribed order of pole at $t_\infty$) has
only double points as its singularities at finite distance. The number of these double points is $\D/2$.

Below we shall bound $2\delta_i$ and $\mu'_\infty$ in terms of multiplicity of a given singular point and so--called
codimension of the singular point. The equality \eqref{eq:Milnor2} will become vital in our estimates. But first 
let us do some local analysis.

\subsection{Codimension of a singular point}
Let $(A,0)\subset (\CC^2,0)$ be a germ of a planar singular curve. Let $\pi:X\to\CC^2$ be the minimal
resolution of the singularity with $E=E_1+\dots+E_s$ the exceptional divisor with reduced scheme structure and
$A'$ the strict transform of $A$. In $H^2(X,\mathbb{Q})$ $A'$ is homologous to a linear combination of the divisors $E_i$,
i.e. $A'=\sum\alpha_iE_i$.
Define $K_E=\sum\beta_iE_i$ by the condition that
\[E_i\cdot(K_E+E_i)=-2\textrm{ \,\,\,\,for all $i=1,\dots,s$,}\]
where $\cdot$ denotes the intersection product. $K_E$ is the (local) canonical divisor of $X$.
\begin{definition}[see \cite{BZ2}]
The quantity
\[\en=K_E\cdot(K_E+E+A')\]
is called the \emph{codimension} of the singular point. Moreover, let $K_E+A'+E=P_E+N_E$ be the Zariski--Fujita
decomposition of the divisor $K_E+A'+E$. $P_E$ is the nef part and $N_E$, the negative part. The quantity
\[\eta:=-N_E^2\ge 0\]
is called the \emph{excess} of the singular point.
\end{definition}
\begin{remark}
In \cite{BZ0} and \cite{BZ2} the external codimension is defined in a different way, namely by means of
Puiseux expansion of branches of the given singular point. That definition, in general, depends on
the choice of coordinates near a singular point. Yet, if the coordinates are generic, by Lemma~2.1 and
Proposition~4.1 in \cite{BZ2}, both definitions agree. We should perhabs call the quantity $K_E(K_E+E+A')$,
rough $\bar{M}$-number, but we find the term ``codimension'' to be more geometric.
\end{remark}
We have the following
\begin{lemma}\emph{(}see \cite[Proposition~2.9, 2.16 and Definition~3.6]{BZ0}\emph{)}\label{bz0}
With the notation as above, let $\delta$ be the $\delta$-invariant of $A$ and $m$, the multiplicity.
If $A$ has one branch then
\begin{equation}
\label{eq:br1}
2\delta\le m(\en-m+2).
\end{equation}
If $A$ has two branches, we have
\begin{equation}
\label{eq:br2}
2\delta\le m(\en-m+3).
\end{equation}
\end{lemma}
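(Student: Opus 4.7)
Since both sides of the claimed inequalities are intrinsic invariants of the germ $(A,0)$, the natural plan is to pass to the minimal embedded resolution $\pi\colon X\to\CC^2$ and express both quantities in terms of the multiplicity sequence at infinitely near points. Let $p_0=0,p_1,\ldots,p_s$ be the infinitely near points of $A$ appearing under $\pi$, and let $m_k$ denote the multiplicity of the strict transform of $A$ at $p_k$, so that $m_0=m$ and the sequence is weakly decreasing.

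First, I would invoke the Noether formula for the $\delta$-invariant: $2\delta=\sum_{k=0}^{s}m_k(m_k-1)$ in the unibranch case, with an additional term $2(B_1\cdot B_2)$ in the two-branch case, where $B_1,B_2$ are the two branches of $A$ at $0$. Second, I would compute $\en=K_E\cdot(K_E+E+A')$ by expanding $K_E=\sum\beta_iE_i$, with discrepancies $\beta_i$ determined by the adjunction relations $E_i\cdot(K_E+E_i)=-2$, and using that $A'\cdot E_i=m_k$ whenever $E_i$ is the exceptional curve produced by blowing up $p_k$. A direct manipulation on the dual graph of the resolution then gives $\en$ as a sum indexed by the infinitely near points whose $k$-th summand is bounded below by $m_k$.

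With both expansions in hand, the bound $2\delta\le m(\en-m+2)$ reduces to a term-by-term comparison. The key elementary inequality is $m_k(m_k-1)\le m\cdot m_k$, valid because $m_k\le m_0=m$, while the shift $-m+2$ is precisely what absorbs the $k=0$ contribution of $\en$. In the two-branch case the extra $+1$ on the right-hand side matches the correction $2(B_1\cdot B_2)$ on the left via the estimate $2(B_1\cdot B_2)\le m_0^2=m^2$ coming from the first blowup, which after division by $m$ accounts exactly for the discrepancy between $+2$ and $+3$.

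The main obstacle, both in this sketch and in the proof recorded in \cite{BZ0}, is the careful bookkeeping on the resolution tree: one must track how the discrepancies $\beta_i$ depend on the position of $E_i$ in the tree and, in the multi-branch case, identify those infinitely near points at which the branches have already separated (and therefore contribute to only one of them). Once this accounting is set up, the final inequality is a purely numerical consequence of the monotonicity $m_k\le m_0$ of the multiplicity sequence together with non-negativity of intersection numbers on the minimal resolution.
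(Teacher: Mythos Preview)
The paper does not give an independent proof of this lemma; it is quoted from \cite{BZ0}. What the paper does prove is the generalisation to $r$ branches (Proposition~\ref{newProp}), and there the argument is organised quite differently from your sketch: it proceeds by induction on the number of branches, using as basic ingredients the Puiseux--theoretic invariants of each branch (the $y$--codimension $\nu_i=\en_i-m_i+2$) and of each pair of branches (the tangency codimension $\nu_{ij}$), together with the estimate $\varepsilon_{ij}\le m_i(\nu_j+\nu_{ij}+1)$ for the local intersection number. No multiplicity sequence on the resolution tree enters explicitly.

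Your outline contains two claims that are not correct as stated and would derail the argument. First, the inequality $2(B_1\cdot B_2)\le m^2$ is false in general: for two smooth branches with contact of order $k$ (say $y=0$ and $y=x^k$) one has $m=2$ but $B_1\cdot B_2=k$, which can be arbitrarily large. High tangency inflates $\delta$ and $\en$ \emph{simultaneously}, and one must track this through an invariant such as the tangency codimension $\nu_{12}$ rather than through the first blowup alone; this is exactly what \eqref{eq:s5} and \eqref{eq:s6.5} accomplish. Second, the assertion that $\en$ expands as a sum over infinitely near points with $k$--th term bounded below by $m_k$ fails already for the ordinary cusp $y^2=x^3$: there $\en=1$ while the multiplicity sequence is $2,1,1$ with $\sum_k m_k=4$. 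Even with the sharper elementary bound $m_k(m_k-1)\le m(m_k-1)$ you would still need a precise formula relating $\en$ to the resolution data, and the ``shift $-m+2$'' you invoke does not fall out of the bookkeeping you describe. A resolution--tree approach can in principle be made to work, but it requires expressing $K_E\cdot(K_E+E+A')$ through the proximity relations among the infinitely near points, not merely the bare multiplicities $m_k$.
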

There similarity of the two formulae is striking. In fact, we can generalise them
as follows
\begin{proposition}~\label{newProp}
With the notation as above, if $r$ denotes the number of branches of $A$ then
\begin{equation}
\label{eq:brn}
2\delta\le m(\en-m+r+1).
\end{equation}
\end{proposition}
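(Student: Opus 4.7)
The plan is to prove Proposition~\ref{newProp} by induction on the number of branches $r$, with the cases $r=1$ and $r=2$ supplied by Lemma~\ref{bz0}. For the inductive step, I fix a germ $A$ with $r \ge 3$ branches and decompose it as $A = A_1 \cup B$, where $B$ is a single branch and $A_1$ is the union of the remaining $r-1$ branches. I work throughout in one fixed embedded resolution $\pi\colon X \to (\CC^2,0)$ of $A$, which automatically embedded-resolves both $A_1$ and $B$.

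The additivity of multiplicity and $\delta$-invariant gives
\[
m(A) = m(A_1) + m(B), \qquad 2\delta(A) = 2\delta(A_1) + 2\delta(B) + 2(A_1 \cdot B)_0.
\]
On $X$ the strict transforms decompose as $A' = A_1' + B'$, and directly from the formula $\en = K_E\cdot(K_E + E + (\cdot)')$ one obtains the identity
\[
\en_X(A_1) + \en_X(B) = \en_X(A) + K_E \cdot (K_E + E),
\]
where a subscript $X$ indicates that the codimension is computed on the possibly non-minimal resolution $X$. A preliminary step is to show that further blowups do not decrease the codimension, so that the inductive hypothesis applied to $A_1$ and the $r=1$ case applied to $B$ remain valid in this common resolution, giving
\[
2\delta(A_1) \le m(A_1)\bigl(\en_X(A_1) - m(A_1) + r\bigr), \qquad 2\delta(B) \le m(B)\bigl(\en_X(B) - m(B) + 2\bigr).
\]

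Summing these two inequalities together with $2(A_1 \cdot B)_0$, and substituting the identity above, rewrites the right-hand side as an expression in $\en_X(A) = \en(A)$, $m(A)$, and the residual quantities $K_E\cdot(K_E+E)$ and $m(A_1) m(B) - (A_1 \cdot B)_0$. The target bound $2\delta(A) \le m(A)(\en(A) - m(A) + r + 1)$ then reduces to a combinatorial inequality on the exceptional graph, pitting the ``cross term'' $K_E\cdot(K_E+E)$ and the intersection defect against $m(A) = m(A_1) + m(B)$. The main obstacle is exactly this combinatorial step, together with the monotonicity of $\en$ under blowups; both should follow from the kind of local multiplicity-sequence analysis already used in the proofs of the $r=1$ and $r=2$ cases in \cite{BZ0}. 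An alternative avenue worth trying, which might avoid the induction altogether, is a direct Zariski--Fujita analysis of $K_E + E + A'$ on $X$ keeping track of all $r$ components of $A'$ simultaneously; this would parallel the proof structure of Lemma~\ref{bz0} at the cost of bookkeeping several intersection numbers at once.
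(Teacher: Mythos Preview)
Your overall plan---induction on $r$, splitting off one branch, and using additivity of $m$ and $\delta$---matches the paper's strategy. But the execution diverges at the crucial point, and your proposal has a genuine gap.

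\medskip
\textbf{The monotonicity claim is false.} You assert that ``further blowups do not decrease the codimension'', so that the inductive bound for $A_1$ carries over to the common resolution $X$. This fails already in the simplest case: if $A_1$ is smooth, its minimal resolution is trivial and $\en(A_1)=0$; after one unnecessary blowup the new exceptional curve $F$ has $F^2=-1$, $K_E=F$, $A_1'\cdot F=1$, and one computes $\en_X(A_1)=F\cdot(2F+A_1')=-1<0$. More generally, blowing up a smooth point of $E\cup A_1'$ typically \emph{decreases} $K_E(K_E+E+A_1')$. So you cannot simply transport the inductive inequality to $X$, and the identity $\en_X(A_1)+\en_X(B)=\en_X(A)+K_E\cdot(K_E+E)$, while correct, does not by itself control $\en(A_1)$ and $\en(B)$ on their own minimal resolutions.

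\medskip
\textbf{What the paper does instead.} The paper abandons the resolution picture for the inductive step and works with Puiseux-type invariants of the individual branches: the $y$-codimensions $\nu_i=\en_i-m_i+2$, the pairwise tangency codimensions $\nu_{ij}$, and the pairwise intersection numbers $\varepsilon_{ij}$. The decisive structural input is the explicit additivity formula
\[
\en(A)=\sum_{i=1}^r\en_i+\sum_{j=2}^r\max_{i<j}\nu_{ij}+2r-2,
\]
which has no clean analogue in your intersection-theoretic language. Two further ingredients are essential and absent from your sketch: (i) the branch to be removed is not arbitrary but is chosen so that $\max_{i<r}\nu_{ir}=\nu_{r-1,r}$; and (ii) the final combinatorial step---your unspecified ``combinatorial inequality on the exceptional graph''---is precisely the inequality $\sum_{i<r}\nu_{ir}\le\sum_{j\ge2}\max_{i<j}\nu_{ij}$, and its proof requires the ultrametric-type property of tangency codimensions (if $\nu(A,C)<\nu(A,B)$ then $\nu(A,C)=\nu(B,C)$). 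Without these two pieces the induction does not close, and neither is visible from the bare intersection form on $X$.
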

\begin{proof}
The proof is very similar to the proof of formula \eqref{eq:br2}, which is contained in \cite[Proposition~2.16]{BZ0}. 
We proceed with induction
with respect to $r$. In order to do this, we have to introduce some additional notation.

For an $r$-branched singularity $A$ with multiplicity $m$, $\delta$-invariant $\delta$ and external codimension $\en$, 
let $A_1,\dots,A_r$ be its irreducible components. Given a branch $A_i$,
let $m_i$ be its multiplicity, $\delta_i$ its $\delta$-invariant, $\en_i$ its codimension and $\nu_i=\en_i-m_i+2$ its
$y$-codimension (see \cite[Definition~2.3]{BZ2}).

Moreover, for two distinct branches $A_i$ and $A_j$ we denote by $\nu_{ij}$ the tangency codimension (see \cite[Remark~1]{BZ2})
of $A_i$ and $A_j$ and by $\varepsilon_{ij}$, the local intersection index of $A_i$ and $A_j$.

We have
\begin{subequations}
\begin{align}
m&=m_1+\dots+m_r\label{eq:s1}\\
\en&=\sum_{i=1}^r\en_{i}+\sum_{j=2}^r\max_{1\le i<j}\nu_{ij}+2r-2&\textrm{ (see \cite{BZ2})}\label{eq:s2}\\
2\delta&=\sum_{i=1}^r2\delta_i+2\sum_{i<j}\varepsilon_{ij}.\label{eq:s3}\\
\intertext{In \cite{BZ0} it was proved that}
2\delta_i&\le m_i\nu_i=m_i(\en_i-m_i+2)\label{eq:s4}\\
\varepsilon_{ij}&\le m_i(\nu_j+\nu_{ij}+1).\label{eq:s5}
\end{align}
\end{subequations}
Up to reordering of first $r-1$ branches we may assume that
\begin{equation}\label{eq:s5.5}
\max_{i\le r-1}\nu_{ir}=\nu_{r-1,r}. 
\end{equation}
Let $\tilde{\delta}$, $\tilde{m}$ and
$\ten$ be corresponding invariants of the singularity $\tilde{A}=A_1+\dots+A_{r-1}$. By \eqref{eq:s2} we have
\begin{equation}\label{eq:s6}
\ten=\sum_{i=1}^{r-1}\en_i+\sum_{j=2}^{r-1}\max_{1\le i\le j-1}\nu_{ij}+2r-4.
\end{equation}
From \eqref{eq:s2} we derive
\begin{equation}\label{eq:s6.5}
\en=\ten+\nu_{r-1,r}+\en_r+2,\,\,\,\,\textrm{cf. \cite[formula (2.9)]{BZ2}}
\end{equation}
The induction assumption that we are making is
\begin{equation}\label{eq:s7}
2\tilde{\delta}\le \tilde{m}(\ten-\tilde{m}+(r-1)+1),
\end{equation}
while we want to prove \eqref{eq:brn}, i.e. $2\delta\le m(\en-m+r+1)$. By \eqref{eq:s3} we have
\begin{equation}\label{eq:s8}
2\delta=2\tilde{\delta}+2\sum_{i=1}^{r-1}\varepsilon_{ir}+2\delta_r.
\end{equation}
From \eqref{eq:s5} and the obvious fact that $\varepsilon_{ir}=\varepsilon_{ri}$ we infer that
\[
2\varepsilon_{ir}\le m_i(\nu_r+\nu_{ir}+1)+m_r(\nu_i+\nu_{ir}+1).
\]
Summing this up and applying \eqref{eq:s5.5} we obtain
\begin{equation}
\label{eq:s9}
2\sum_{i=1}^{r-1}\varepsilon_{ir}\le \tilde{m}(\nu_r+\nu_{r-1,r}+1)+m_r(\sum_{i=1}^{r-1}(\nu_i+\nu_{ir})+r-1).
\end{equation}
By induction assumption \eqref{eq:s7} we obtain from \eqref{eq:s8}, \eqref{eq:s9} and \eqref{eq:s4}
\begin{equation}\label{eq:s10}
\begin{split}
2\delta\le&\tilde{m}\left(\ten-\tilde{m}+r+\nu_r+\nu_{r-1,r}+1\right)+\\
&+m_r\left(\sum_{i=1}^r\nu_i+r-1+\sum_{i=1}^{r-1}\nu_{ir}\right).
\end{split}
\end{equation}
The first term in parenthesis is equal to $\en-m+r+1$ by \eqref{eq:s6.5}. It remains to prove that
\[
\sum_{i=1}^r\nu_r+r-1+\sum_{i=1}^{r-1}\nu_{ir}\le\en-m+r+1. 
\]
But $\nu_i=\en_i-m_i+2$, so the above inequality becomes
\begin{equation}\label{eq:s11}
\sum_{i=1}^r\en_i+\sum_{i=1}^{r-1}\nu_{ir}+2r-2\le \en.
\end{equation}
By the recursive formula \eqref{eq:s2} the inequality \eqref{eq:s11} is equivalent to
\begin{equation}\label{eq:s12}
\sum_{i=1}^{r-1}\nu_{ir}\le\sum_{j=2}^r\max_{1\le i\le j-1}\nu_{ij}.
\end{equation}
This will follow from the following
\begin{lemma}\emph{(}\cite[Lemma~2.13]{BZ2}\emph{)}\label{lemglupglup}
Assume we are given three branches $A$, $B$ and $C$ of one given singular point and let $\nu(A,B)$, $\nu(A,C)$ and
$\nu(B,C)$ denote the corresponding tangency codimensions. If $\nu(A,C)<\nu(A,B)$ then
\[\nu(A,C)=\nu(B,C).\]
\end{lemma}
From this lemma, formula \eqref{eq:s12} follows by induction on $r$. Suppose that
we already know that
\[\sum_{i=1}^{r-2}\nu_{i,r-1}\le \sum_{j=2}^{r-1}\nu_{j-1,j}.\]
We ask whether
\[\sum_{i=1}^r\nu_{ir}\le \sum_{j=2}^r\nu_{j-1,j}.\]
Substracting both sides of these inequalities we see that the induction step shall be accomplished once we have shown that
\[\sum_{i=1}^{r-2}(\nu_{ir}-\nu_{i,r-1})+\nu_{r-1,r}\le \nu_{r-1,r}.\]
Suppose that $\nu_{ir}>\nu_{i,r-1}$ for some $i\le r-2$. By Lemma~\ref{lemglupglup} it follows
that $\nu_{i,r-1}=\nu_{r-1,r}$. But then $\nu_{ir}>\nu_{r-1,r}$ which contradicts \eqref{eq:s5.5}.
\end{proof}
\begin{example}\label{ordinary}
The many inequalities that appear in the above proof suggest that the estimate \eqref{eq:brn} is not optimal
and could be improved. This impression is unfortunately misleading, 
as can be seen by looking on the ordinary $n$-tuple point. Such
a point has external codimension of $n-2$, multiplicity $n$ and $\delta$-invariant $2\delta=(n^2-n)$.
For such singularity we have the equality of both sides of \eqref{eq:brn}.
\end{example}
Let us recall also the estimate for $\mu'(A)$ defined in Lemma~\ref{Mil1}.
\begin{lemma}\emph{(}see \cite[Proposition 2.11]{BZ0}\emph{)}\label{Mil2}
Let $x=t^n+\dots$, $y=t^m+\dots$ give a local parametrisation of an unibranched singularity. Let $\mu'(A)$ be
as in Lemma~\ref{Mil1} and $n'=\gcd(n,m)$. Then 
\[\mu'(A)\le n'\nu',\]
where $\nu'$ is the subtle codimension (see ibidem) of the singular point.
\end{lemma}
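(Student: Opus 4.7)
\medskip

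My plan is to analyze the Milnor number directly from the Puiseux parametrization and extract the contribution $\mu'(A)$ as a count of non-generic terms in the expansion, each weighted by $n'$. The starting point is Lemma~\ref{Mil1}, which identifies $\mu'(A) = \mu(A) - (n-1)(m-1) - (n'-1)$ as the ``excess'' of the Milnor number over its value for a generic parametrization with the prescribed orders at $t_\infty$. The content of the present lemma is that this excess is controlled linearly by the subtle codimension.

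First I would reduce to a normal form for the parametrization. Since the singularity is unibranched, one can Puiseux-expand $y$ as a fractional power series in $x$; equivalently, work with the parametrization $(x,y) = (t^n, \sum a_k t^k)$ after a formal change of coordinate in $t$. Write $n = n' \bar n$ and $m = n' \bar m$ with $\gcd(\bar n, \bar m) = 1$. Because $n' > 1$, the ``generic'' situation producing only the Milnor number $(n-1)(m-1)+n'-1$ is the one where the first Puiseux coefficient that breaks divisibility by $t^{n'}$ appears at the lowest exponent allowed by genericity; whenever one of these ``breaking'' coefficients vanishes, the semigroup of values of the local ring $\mathcal{O}_{A,0}$ becomes strictly smaller, and the Milnor number $\mu = 2\delta$ jumps up correspondingly (via the conductor formula $2\delta = \#(\mathbb{N}\setminus S)$, where $S$ is the value semigroup).

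Next I would identify $\nu'$, as defined through the Puiseux data in \cite{BZ0, BZ2}, with precisely the number of such independent vanishings needed to realize the given singularity type. The key structural observation is that because $\gcd(n,m) = n'$, the values in the local ring $\mathcal{O}_{A,0}$ that can be ``lost'' when a Puiseux coefficient vanishes come in blocks of size at most $n'$: once a coefficient $a_k$ vanishes, the parametrization of the curve looks, up to that level, like it factors through $t \mapsto t^{n'}$, so $n'$ consecutive potential generators of the semigroup get shifted out in one stroke. Summing over the $\nu'$ independent such vanishings and using that the contributions add, one obtains
\[
\mu'(A) = \sum (\text{contribution from each vanishing}) \leq n' \nu'.
\]

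The main obstacle, and the step that needs genuine care, is the last one: showing that the blocks of ``lost'' semigroup values corresponding to distinct vanishing Puiseux coefficients really do not overlap and that each contributes at most $n'$. This requires the combinatorial analysis of the semigroup $S$ along the Puiseux pairs of $A$, precisely the calculation carried out in \cite[Proposition~2.11]{BZ0}. Once that combinatorial count is established, the inequality $\mu'(A) \le n'\nu'$ follows directly, with equality characterising the most degenerate singularities (analogous to Example~\ref{ordinary}).
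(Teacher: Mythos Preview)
The paper does not prove this lemma at all: it is stated with the attribution ``see \cite[Proposition~2.11]{BZ0}'' and no argument is given in the present text. So there is no in-paper proof to compare against.

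Your sketch correctly isolates the mechanism (vanishing Puiseux coefficients force the value semigroup to shrink in blocks of size bounded by $n'$), and the intuition is sound. However, as written it is not an independent proof: at the decisive step --- controlling the size of each block and the non-overlap of contributions --- you explicitly defer to ``precisely the calculation carried out in \cite[Proposition~2.11]{BZ0}''. But that proposition \emph{is} the lemma you are trying to prove, so the argument is circular as stated. If your intention is merely to give heuristic context before invoking the reference, that is fine and arguably more informative than the paper's bare citation; but you should be aware that nothing beyond \cite{BZ0} has actually been established. If instead you want a self-contained argument, you would need to carry out the semigroup/Puiseux-pair computation in full rather than pointing back to the cited proposition, and this also requires stating precisely the definition of the subtle codimension $\nu'$, which lives in \cite{BZ0} and is not reproduced here.
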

Let us return for a while to notation of Section~\ref{ss1}. Let the $i$-th singular point has $r_i$ branches
and codimension $\en_i$. Let the subtle codimension at infinity be denoted by $\nu'_\infty$. Introduce
the notation
\begin{equation}\label{eq:mathcalE}
\E:=\sum_{i=1}^N m_i(\en_i-m_i+r_i+1)+p'\nu'_\infty.
\end{equation}
Then, the equality \eqref{eq:Milnor2} together with inequality~\eqref{eq:brn} and Lemma~\ref{Mil2} yield
\begin{equation}
\Delta=\D-\E\le 0.
\end{equation}
In all instances of the proof of Theorem~1 and Theorem~2, we shall strive to prove that if all the inequalities
\eqref{eq:MT1A},\dots, \eqref{eq:MT1F}
(respectively \eqref{eq:MT2A},\dots, \eqref{eq:MT2F}) hold then always $\Delta>0$.

\subsection{Estimating codimensions}
The bounds of type \eqref{eq:brn} are of no use to us if we cannot control the sum of codimensions of different singular
points of a fixed algebraic curve $C$. However the Bogomolov--Miyaoka--Yau inequality (see \cite{KNS}) can be
used to estimate this sum. Let us recall and slightly generalise the results obtained in \cite{BZ2}.

Recall that we are studying a curve $C\subset\CC P^2$ with $N$ singular points $z_1,\dots,z_N$ at finite distance and
one place at infinity $z_\infty$ with one branch. 
Let $L_\infty$ be the line at infinity. Let $\pi:X\to\CC P^2$ be the minimal resolution
of singularities $\tilde{C}=C\cup L_\infty$ such that $\pi^{-1}(\tilde{C})_{red}$ is an NC divisor. Denote by $C'=\pi'(\tilde{C})$ the
strict transform of $\tilde{C}$ and by $E$, the reduced exceptional divisor. 
Finally, let $K=K_X$ be the canonical divisor and
let us put $D=C'+E$.

We assume that the resolution is minimal, so $E$ does not contain any $(-1)-$curve $F$ such that $F(E-F)\le 2$. We shall
assume also that the pair $(X,D)$ is relatively minimal. There are arguments in \cite{BZ2} that if $(X,D)$ is
not relatively minimal then the estimates obtained are even better. Therefore the relative minimality assumption is used
only to make the discussion below more transparent to the reader.

We are interested only in the case when $\deg C\ge 4$ and $C$ has at least three singular points. 
By \cite{Wa} $\bar{\kappa}(\CC P^2\setminus C)=2$,
so $\bar\kappa(X\setminus D)$ is also equal to $2$. Here $\bar\kappa$ stays for the logarithmic Kodaira dimension.

By the relative minimality and since $\bar\kappa(X\setminus D)\ge 0$, there exists the Zariski--Fujita
decomposition of the divisor $K+D$. Let $H$ denotes the nef part and $N$ the negative part of $K+D$. We have
$K+D=H+N$ and $(K+D)^2=H^2+N^2$ with $N^2<0$. The BMY inequality says that
\[H^2\le 3\chi(X\setminus D)=3-3\chi(C^0).\]
But $C$ is a curve of geometric genus $g$ with total number of branches $R$ (see Definition~\ref{NoBr}). Then the Euler characteristic
of $C^0$ is equal to $1-2g-R$. Therefore we obtain
\[(K+D)^2\le 6g+3R+N^2.\]
Now $(K+D)^2=K(K+D)+D(K+D)=K(K+D)+2p_a(D)-2$, where $p_a(D)$ is arithmetic genus of $D$. By invariance of arithmetic genus
we have $p_a(D)=p_a(C\cup L_\infty)=g+R$. Hence we obtain
\begin{equation}\label{eq:bmy1}
K(K+D)\le 4g+R+2-(-N^2).
\end{equation}
Observe now that $W=\Pic X\otimes\mathbb{Q}$ is spanned by the class of (inverse image of) 
a generic line $L$ in $\CC P^2$ and
by exceptional divisors of the map $\pi$. Let $V_0\subset W$ be the one--dimensional linear subspace
of $W$ spanned by $L$ (we denote the divisor and its class in $W$ by the same letter). Let $V_i\subset W$,
$i\in\{1,2,\dots,N,\infty\}$ be the subspace spanned by exceptional divisors $E_{i1},\dots,E_{is_i}$
such that $\pi(E_{ij})=z_i$. It is easy to see that $W$ is the direct sum of spaces $V_0,V_1,\dots,V_N,V_\infty$
and all components of this sum are pairwise orthogonal with respect to the intersection form. Therefore we can compute
$K(K+D)$ by projecting $K$ and $K+D$ onto spaces $V_i$, computing $K_i(K_i+D_i)$ and summing up the results.

More precisely, let $K=K_0+K_1+\dots+K_N+K_\infty$, $D=D_0+D_1+\dots+D_N+D_\infty$ be the decomposition
of $K$ and $D$ into pieces lying in different subspaces $V_i$. Obviously we have $K_0=-3L$ and $D_0=-(\deg C+1)L$.
Moreover, by construction $K_i(K_i+D_i)=\en_i$ for $i=1,\dots,N$.
\begin{lemma}\emph{(}\cite[Lemma~4.26]{BZ2}\emph{)}
With the notation as above we have
\[K_\infty(K_\infty+D_\infty)=q+q-p-2+\nu'_\infty.\]
\end{lemma}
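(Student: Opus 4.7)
The plan is to read off the local structure of $\tilde{C}=C\cup L_\infty$ at $z_\infty$ and then apply the two--branch decomposition \eqref{eq:s6.5} proved in the discussion preceding Proposition~\ref{newProp}. Since $\deg C=q$ and $C\cap L_\infty=\{z_\infty\}$ with one branch, the singular germ of $\tilde C$ at $z_\infty$ has exactly two branches: the branch of $C$ and the smooth germ of $L_\infty$. Switching to affine coordinates at infinity (for instance $X=x/y$, $Y=1/y$) and reading off orders at the pole $t_\infty$ of the normalisation, the $C$--branch at $z_\infty$ is parametrised with orders $(q-p,q)$, so its multiplicity is $q-p$; meanwhile the local intersection index $(C\cdot L_\infty)_{z_\infty}=q$.

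First I would use the two--branch formula \eqref{eq:s6.5}, which in the present notation reads
\[
\en_\infty=\en_{C,\infty}+\en_{L_\infty,\infty}+\nu_{C,L_\infty}+2,
\]
where $\en_\infty$ is the total external codimension of $\tilde C$ at $z_\infty$ (this is the quantity $K_\infty(K_\infty+D_\infty)$ by the same orthogonal--decomposition argument that gives $K_i(K_i+D_i)=\en_i$ for the finite singular points, as used in \eqref{eq:mathcalE}). Since $L_\infty$ is a smooth branch, its own codimension vanishes.

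Next, I would compute each of the two remaining terms. The tangency codimension $\nu_{C,L_\infty}$ is determined by the excess of the intersection index $q$ over the product $(q-p)\cdot 1$ of multiplicities, so it contributes the ``extra'' contact order $p$ (up to a normalising shift dictated by \cite{BZ2}). The codimension $\en_{C,\infty}$ of the unibranched $C$--germ with parametrisation orders $(q-p,q)$ is then expressed via its Puiseux data: by Lemma~\ref{Mil2} and the relation between the rough and subtle codimensions recalled in the Remark preceding Lemma~\ref{bz0}, $\en_{C,\infty}$ splits into an explicit polynomial piece in $p,q$ (coming from the leading Puiseux pair) plus the subtle--codimension correction $\nu'_\infty$.

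Finally, adding everything up and using $\gcd(q-p,q)=\gcd(p,q)=p'$ should collapse the various contributions to the stated right--hand side. The main obstacle is the book--keeping in step three: one has to verify that the subtle codimension $\nu'_\infty$ defined via the Puiseux expansion in \cite{BZ2} is compatible with the two--branch formula \eqref{eq:s6.5}, so that the forced tangency of $L_\infty$ with the $C$--branch (intersection $q$, not just $q-p$) contributes exactly through $\nu_{C,L_\infty}$ and no part of $\nu'_\infty$ is double--counted. Once this compatibility is pinned down (it is essentially the content of the cited formula (2.9) of \cite{BZ2}), the claimed identity follows by direct substitution and simplification.
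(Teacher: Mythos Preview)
The paper does not give a proof of this lemma at all; it simply quotes the result from \cite[Lemma~4.26]{BZ2} and moves on. So there is nothing in the present paper to compare your argument against.

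Your outline is a sensible strategy and the first two identifications are correct: $K_\infty(K_\infty+D_\infty)$ is indeed the external codimension of the two--branched germ $(C\cup L_\infty,z_\infty)$, and the two--branch splitting \eqref{eq:s6.5} is the natural tool. The difficulty you flag, however, is not merely book--keeping but is the actual content of the cited lemma. In the chart $(X,Y)=(x/y,1/y)$ the branch of $C$ is tangent to $L_\infty=\{Y=0\}$, so the coordinates in which $\nu'_\infty$ is defined are \emph{not} generic for the $C$--branch; consequently the Puiseux--based codimensions of \cite{BZ2} need not agree on the nose with the geometric quantity $K_E(K_E+E+A')$ (this is exactly the caveat in the Remark following the definition of $\en$). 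Pinning down how $\en_{C,\infty}$, the tangency codimension $\nu_{C,L_\infty}$, and the subtle codimension $\nu'_\infty$ fit together in this non--generic situation is precisely what \cite[Lemma~4.26]{BZ2} does, via an explicit Puiseux/resolution computation. Your sketch is therefore a correct road map to the proof in \cite{BZ2}, but as written it defers the substantive step to that reference rather than supplying it.
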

Therefore we obtain from \eqref{eq:bmy1}
\begin{equation}\label{eq:bmy2}
\sum_{i=1}^N\en_i+\nu'_\infty\le p+q-2+4g+R-(-N^2).
\end{equation}
To complete our task in this section we have to estimate $N^2$. By definition $N$ is supported on all rational twigs of $D$
(see \cite{Fuj}). So $N$ is the sum of components $N_i$ lying in $V_i$ for $i\in\{1,\dots,N,\infty\}$,
$-N^2=-N_1^2-\dots-N_N^2-N_\infty^2$. But $-N_i^2=\eta_i$ is the excess of the singular point $z_i$. 
The inequality \eqref{eq:bmy2}
can be rewritten as
\begin{equation}\label{eq:bmy3}
\sum_{i=1}^N\en_i+\nu'_\infty\le p+q-2+4g-\sum_{i=1}^N\eta_i.
\end{equation}
The term $\eta_\infty$ in \eqref{eq:bmy3} has been omitted. In fact, for a multibranched singularity the only thing
we know about $\eta$ is that it is non--negative. For example, for an ordinary $n-$tuple point, $\eta=0$. On the other
hand we have the following
\begin{lemma}[see \cite{ZO,BZ2}]
For a cuspidal singularity $\eta>\frac12$. Moreover if the multiplicity of the singular point is equal to $2$
then $\eta\ge\frac56$.
\end{lemma}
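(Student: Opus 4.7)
\medskip

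\textbf{Proof plan.} The plan is to read off both inequalities directly from the structure of the minimal embedded resolution $\pi\colon X\to \CC^2$ of the cuspidal singular point $(A,0)$, together with the explicit form of the Zariski--Fujita decomposition $K_E+E+A'=P_E+N_E$. First, I would recall that because $(X,E+A')$ is relatively minimal and $A$ is unibranched, the negative part $N_E$ is supported on the union of the maximal \emph{rational twigs} of $E$, that is, on those maximal chains $T=T_1+\dots+T_s$ of rational components of $E$ that are attached to the rest of $E+A'$ only at $T_1$ and whose components $T_j$ have self-intersection $T_j^2=-a_j\le -2$. The twigs are disjoint and pairwise orthogonal, so $-N_E^2=\sum_T(-N_T^2)$, where each summand is the quantity determined by solving the linear system $N_T\cdot T_j=-(K+E+A')\cdot T_j$ on the twig $T$.

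Next, I would use the standard continued-fraction formula for a twig. If the twig has type $[a_1,\dots,a_s]$ and the only nonzero boundary value is $(K+E+A')\cdot T_1=a_1-2$ (the other $T_j$ being $(-2)$-curves not meeting $A'$ nor any other component outside the twig, so $(K+E+A')\cdot T_j=0$), one gets an explicit positive rational value for $-N_T^2$; in particular for a single $(-a_1)$-twig one obtains $-N_T^2=(a_1-2)^2/a_1$. Since for a cusp the minimal embedded resolution contains at least one rational twig with $a_1\ge 2$ (this is where I would invoke the structure of the resolution in terms of the Puiseux pairs $(p_k,q_k)$, following \cite{BZ2}), one concludes $-N_E^2>0$; a short case analysis on the smallest possible twig, namely $[3]$ giving $1/3$, together with the fact that a cusp always forces at least two twigs or a longer one, gives the strict bound $\eta>\tfrac12$.

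For the stronger bound in the multiplicity $2$ case, I would use that a unibranched multiplicity-$2$ singularity is necessarily of type $A_{2k}$ with a single Puiseux pair $(2,2k+1)$, $k\ge 1$. The minimal embedded resolution is then a chain whose dual graph is completely explicit, producing exactly two rational twigs whose continued-fraction data I would compute by induction on $k$. The smallest case $k=1$ (the ordinary cusp $y^2=x^3$) gives twigs $[3]$ and $[2]$, whose contributions are $1/3$ and $1/2$ respectively, summing to exactly $5/6$; the inductive step shows that higher $k$ only enlarges the contribution, yielding $\eta\ge \tfrac56$ with equality precisely for $A_2$.

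The main obstacle is the bookkeeping of the twigs in the general cuspidal case: one has to verify that the self-intersections coming from the Euclidean algorithm on the Puiseux data never conspire to produce a twig contribution below $\tfrac12$. This is the content of the computation in \cite{ZO,BZ2}, and in practice it reduces to the elementary observation that in any continued-fraction twig $[a_1,\dots,a_s]$ with $a_i\ge 2$, the contribution $-N_T^2$ is bounded below by $(a_1-2)^2/a_1$, which is already $\ge 1/2$ as soon as a single $a_i\ge 3$ appears, while a pure $[2,\dots,2]$ twig is ruled out for cusps by the minimality of the resolution together with the presence of $A'$.
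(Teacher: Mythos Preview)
The paper does not actually prove this lemma; it is stated with a bare citation to \cite{ZO,BZ2}, so there is no in-text argument to compare against. Your overall strategy---compute $\eta=-N_E^2$ as the sum over the rational twigs of the minimal embedded resolution and then minimise over all cuspidal (resp.\ multiplicity-$2$) configurations---is exactly the approach of those references, and your identification of the $A_2$ cusp as the extremal case for multiplicity $2$, with $\eta=\tfrac13+\tfrac12=\tfrac56$, is correct.

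However, several of the intermediate computations are wrong and the argument as written does not go through. First, your boundary value is miscomputed: for a tip $T_1$ of a twig one has $(K_E+E+A')\cdot T_1=(a_1-2)+(-a_1+1)+0=-1$, not $a_1-2$; you forgot the contribution $E\cdot T_1=T_1^2+(E-T_1)\cdot T_1=-a_1+1$. Consequently the correct contribution of a one-component $(-a)$ twig is $-N_T^2=1/a$, not $(a-2)^2/a$. Your own numbers for the $A_2$ cusp ($1/3$ for $[3]$ and $1/2$ for $[2]$) already contradict your formula, which would give $0$ for $[2]$. Second, the assertion that ``a pure $[2,\dots,2]$ twig is ruled out for cusps'' is false: the $A_2$ resolution itself has a $[2]$ twig, and for $A_{2k}$ one of the two twigs is always $[2]$ while the other is $[2,\dots,2,3]$ with $k-1$ initial $(-2)$-curves. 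Third, the claimed inequality $(a_1-2)^2/a_1\ge\tfrac12$ for $a_1\ge 3$ fails already at $a_1=3$. So the lower bound $\eta>\tfrac12$ in the general cuspidal case cannot be obtained from the displayed chain of inequalities; one needs the finer analysis of all twigs arising from the Euclidean algorithm on the Puiseux data, as carried out in \cite{ZO} (and recalled in \cite{BZ2}), where the minimum $\eta=\tfrac56$ over all cusps is in fact attained at $A_2$, and in particular $\eta>\tfrac12$ holds for every cusp.
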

\begin{remark}
This coefficient $\frac56$ affects very strongly the estimates given in Theorems~1 and~2 (e.g. $\frac{24}{11}=4\cdot(\frac56+1)^{-1}$).
There is a hope that a refinement of the BMY inequality (for example such like in \cite{Lan}) can lead to
an improvement of our results.
\end{remark}
\section{Proof of Theorem 1}\label{Sec3}
We will attempt to pick such $\en_i$, $m_i$ and later $p$ and $q$ so that $\E$ is maximal
possible and then $\Delta$ is as small as possible, ensuring that the inequalities
\eqref{eq:multbound} and \eqref{eq:bmy3} are satisfied.

First we observe that while we are studying the quantity $\E$ (see \eqref{eq:mathcalE}) two possibilities may occur: 
either the term $\sum_{i=1}^N m_i(\en_i-m_i+r_i+1)$ is dominating,
or the term $p'\nu'_\infty$ is dominating. The second possibility may occur 
when $p'>\max m_i$, in particular $p'\ge 3$ (otherwise we can increase, say $\en_1$, at the cost of decreasing $\nu'_\infty$ and
$\E$ will not decrease). Therefore we shall discuss two separate cases.
\subsection{When singularities at finite distance are dominating.}
Upom renumerating $z_1,\dots,z_N$ we may assume that $m_1\ge m_2\ge\dots\ge m_N$.
Observe that, by assumptions of Theorem 1, $r_i=1$. Therefore the inequality
\eqref{eq:multbound} takes the following form
\begin{equation}\label{eq:multbound2}
\sum_{i=1}^N(m_i-1)\le p+2g-1.
\end{equation}
It is easy to see that $\E$ is maximal if $\en_1$ is maximal possible and $\en_i$, $\nu'_\infty$
are minimal possible. We can also assume that $m_i$ are minimal possible for $i\ge 2$.
More
concretely, suppose first that $m_2,\dots,m_{s+1}\ge 3$ and $m_{s+2}=\dots=m_N=2$.
Denoting
\begin{equation}\label{eq:defr}
r=N-1-s
\end{equation}
we arrive at
\begin{equation}\label{eq:E0}
\E=m_1(\en_1-m_1+2)+2r+6s,
\end{equation}
where
\begin{subequations}
\begin{align}
m_1+r+2s\le& p+2g\label{eq:B00}\\
\en_1\le& p+q+4g-\frac52-\frac{11}{6}r-\frac72s.\label{eq:B01}
\end{align}
\end{subequations}
In the second inequality \eqref{eq:B01} we used the fact that there are precisely $r$ singular points
with multiplicity $2$ (so excess is at least $\frac56$ and codimension at least $1$),
and $s$ points with multiplicity $3$, so their codimension is at least $3$. There are $s+1$ points
$m_1,\dots,m_{s+1}$ with excess greater than $\frac12$. As $\E$ grows with $\en_1$, we shall assume
the equality in \eqref{eq:B01}.

Now observe that $\en_1$ is larger than $2(p-2g-2s-r)$. Therefore
the quadratic function $m_1\to m_1(\en_1-m_1+2)$ is increasing for $m_1$ satisfying 
\eqref{eq:B00}. Hence $\E$ is maximal if $m_1$ is maximal possible. 

Therefore we write
\begin{equation}\label{eq:wtracone}
\E\le \E_1=2r+6s+(p+2g-r-2s)(q+2g-\frac56r-\frac32s-\frac12).
\end{equation}
Using $N=r+s+1$ we obtain from the above formula
\[
\E_1=2N+4s-2+(p+2g-s-N+1)(q+2g-\frac56N-\frac23s+\frac13).
\]
Differentiating $\E_1$ with respect to $s$ we get
\[\frac{\partial \E_1}{\partial s}=4+\frac43s-(q+2g-\frac56N+\frac13)-\frac23(p+2g-N+1).\]
As $q\ge p+1$, by \eqref{eq:multbound} we see that the first expression in parenthesis
is greater or equal to $s+\frac16N+\frac73$, while the second is bounded from below by $s+2$.
Therefore $\frac{\partial \E_1}{\partial s}\le \frac13-\frac13s-\frac16N<0$. Hence
\[\E_1\le \E_2:=\E_1|_{s=0}=2N-2+(p+2g-N+1)(q+2g-\frac56N+\frac13).\]
\begin{remark}
We could not have just simply differentiated \eqref{eq:wtracone} with respect to $s$ to obtain that $\E_1$ decreases with $s$.
In fact, if we keep $N$ constant then changing $s$ results in changing also $r$ according to \eqref{eq:defr}.
Therefore, before applying $\frac{\partial}{\partial s}$ we have put $\E_1$ in such a form, that other variables do not
depend implicitly on $s$. This type of reasoning will be used in the sequel without additional comments.
\end{remark}
Now let us define $\Delta_2=\D-\E_2$. As $p'\le q-p$ we can write
\[\Delta_2\ge\Delta_3=(p-1)(q-1)-(p+2g-N+1)(q+2g-\frac56N+\frac13)-2g-2N+p-q+3.\]
The derivative of $\Delta_3$ with respect to $q$ is equal to $-2g+N-3$.
By \eqref{eq:MT1A} $\Delta_3$ is increasing  with respect to $q$. Putting $q=p+1$ we obtain
\begin{align*}
\Delta_3\ge\Delta_4=\Delta_3|_{q=p+1}=&p(p-1)-\left(p+2g-N+1\right)\left(p+2g-\frac56N+\frac43\right)-\\
&-2g-2N+2.
\end{align*}
$\Delta_4$ is a linear function with $p$ with leading coefficient
\[\frac{11}{6}N-4g-\frac{10}{3}.\]
But, by \eqref{eq:MT1B} this term is positive. Therefore $\Delta_4$ will be
minimal if $p$ is minimal possible. In view of \eqref{eq:multbound2} we have $p\ge N+1-2g$. Hence
\[\Delta_4\ge\Delta_5=\Delta_4|_{p=N+1-2g}=N^2-4gN+4g^2-\frac43N-4g-\frac83.\]
From \eqref{eq:MT1C}, $N> 2g+\frac23+\sqrt{\frac{20}{3}g+\frac{28}{9}}$,
so $\Delta_5>0$.

Therefore in the case when terms with $m_1(\en_1-m_1+2)$ is larger than $p'\nu'_\infty$ leads
to $\Delta>0$.

Remark that if, say, \eqref{eq:MT1B} does not hold then $\Delta_4$ is linear function that decreases with $p$.
Therefore we can by no means expect that $\Delta_4>0$ for all reasonable values of $p$: we can take $p$
as large as we want so that $\Delta_4<0$ and nothing can be done. Therefore the inequality \eqref{eq:MT1B} has real importance.
\subsection{When $p'$ dominates.}\label{2.3}

Since $p'\ge 3$ we have also (see beginning of Section~\ref{Sec3}).
\begin{equation}\label{eq:trivial1}
p\ge 6.
\end{equation}

Let us assume that there are $s$ singular points at finite distance with multiplicity $\ge 3$ and $r$ singular points
with multiplicity $2$. Together there are $N=r+s$ singular points at finite distance. Observe that in this case we
do not distinguish a special singular point $m_1$ and formula $r=N-s$ differs from formula \eqref{eq:defr} from
the previous section.

It is easy to see that $\E$ is maximal if codimensions and multiplicities at singular points at finite distance are
minimal possible. So we put $m_1=\dots=m_s=3$, $m_{s+1}=\dots=m_N=2$, $\en_1=\dots=\en_s=3$, $\en_{s+1}=\dots=\en_N=1$
in $\E$. So we obtain
\[
\E=2r+6s+p'\nu'_\infty.
\]
From \eqref{eq:bmy3} we know that
\begin{equation}\label{eq:B02}
\nu'_\infty\le p+q+4g-\frac{11}{6}r-\frac72s-2.
\end{equation}
This formula differs from \eqref{eq:B01} by the term $-\frac12$ that is absent in \eqref{eq:B02}
(we have $-2$ instead of $-\frac72$). This diffenence
comes from the fact that there are $r+s+1$ singular points in the previous case and $r+s$ singular points in this case.

Substituting \eqref{eq:B02} into $\E$, and assuming an equality in \eqref{eq:B02} we get
\begin{equation}\label{eq:E20}
\begin{split}
\E=&2r+6s+p'(p+q+4g-\frac{11}{6}r-\frac72s-2)=\\=&2N+4s+p'(p+q+4g-\frac{11}{6}N-\frac53s-2).
\end{split}
\end{equation}
Obviously $\frac{\partial \E}{\partial s}=4-\frac53p'<0$ as $p'\ge 3$. Therefore
\[\E\le \E_6=\E|_{s=0}=2N+p'(p+q+4g-\frac{11}{6}N-2).\]
Define $\Delta_6=\D-\E_6$. We obtain
\[\Delta_6=(p-1)(q-1)-p'+1-p'(p+q+4g-\frac{11}{6}N-2)-2g-2N.\]
Clearly $\Delta_6$ is increasing with $q$. Thus
\begin{align*}
\Delta_6\ge\Delta_7=\Delta_6|_{q=p+p'}=&(p-1)(p+p'-1)-p'(2p+p'+4g-\frac{11}{6}N-2)-\\
&-2g-2N-p'+1.
\end{align*}
But $p'\to\Delta_7(p')$ is a concave function. So $\Delta_7(p')\ge\min(\Delta_7(3),\Delta_7(p/2))$ as $p'\in[3,p/2]$.
With $p'=3$ we obtain.
\[\Delta_7(3)=p^2-5p-14g+\frac72N-7.\]
Recall that by \eqref{eq:trivial1} $p\ge 6$. 
Therefore $\Delta_7(3)$ grows with $p$. Putting $p\ge N-2g+1$ we obtain
\[\Delta_7(3)\ge\Delta_8=(N-2g+\frac14)^2-7g-\frac{177}{16}.\]
Using \eqref{eq:MT1D} we obtain $\Delta_7(3)>0$.

On the other hand let
\[\Delta_9=\Delta_7(\frac{p}{2})=\frac14p^2-p(2g-\frac{11}{6}N+2)-2g-2N+2.\]
Then
\[2\frac{\partial\Delta_9}{\partial p}=p-4g+\frac{11}{6}N-4\stackrel{p\ge N-2g+1}{\ge}\frac{17}{6}N-6g-3.\]
The latter part is positive by \eqref{eq:MT1E}. So 
\[\Delta_9\ge\Delta_{10}=\Delta_9|_{p=N-2g+1}=
\frac76\left(N^2-(\frac{29}7g+\frac{31}{14})N+\frac{30}{7}g^2-\frac{6}{7}g+\frac{3}{14}\right).\]
But by \eqref{eq:MT1F} we have $\Delta_{10}>0$. This ends the proof of Theorem 1.
\begin{remark}\label{ong=1}
If $g=1$, the inequality \eqref{eq:MT1D} is not satisfied for $N=6$ because $\frac{7}{4}+\sqrt{7+\frac{177}{16}}=6$.
This is a slight problem, since a genus~$1$ curve with $6$ singular points at finite distance would violate
the Zaidenberg--Lin conjecture. But if $N=6$, and $g=1$ then $\Delta_7(3)=p^2-5p$. But by \eqref{eq:trivial1}
$\Delta_7(3)>0$ for $g=1$ and $N=6$. This case has not been rejected directly, because for small $N$ and $g$,
the inequality $p\ge N-2g+1$ may be weaker than $p\ge 6$.
\end{remark}
\section{Proof of Theorem 2}\label{Sec4}
The proof goes along the lines of the proof of Theorem 1. It has however an additional
ingredient: curves are allowed to have more branches at a given singular point.

Namely let us observe that the quantity $\E$ defined in \eqref{eq:mathcalE} is maximal 
if precisely one term is dominating, as in the proof of Theorem 1. We have
then two cases depending on whether this dominating term comes from singularity at finite
distance, or it is the term $p'\nu'_\infty$ that is the contribution from infinity.

\subsection{When singularities at finite distance dominate}\label{3.2}

In this case it is a trivial observation that $\E$ is maximal if $\en_1$ is as large as
possible and $\en_2,\dots,\en_N,\nu'$ are minimal. Let us discuss the minimal possible
values of $\en_i$
depending on the type of the singular point $z_i$.
\begin{itemize}
\item[1] $z_i$ is a unibranched singular point with multiplicity $2$. Then we
may assume that $\en_i=1$ and $\eta_i\ge\frac56$. $r$ will
denote the number of these points.
\item[2] $z_i$ is a unibranched singular point with multiplicity at least three. We shall
assume that $\en_i=3$, $m_i=3$ and $\eta_i>\frac12$. Such a point will contribute
$m_i(\en_i-m_i+2)=6$ to the sum $\E$. We will assume that there are precisely $s$ such
points.
\item[3] $z_i$ has $r_i>1$ branches. Then $\en_i$ must be larger than $r_i-2$. We shall
assume that $\en_i=r_i-2$ and $m_i=r_i$, what corresponds to an ordinary $r_i$-tuple point.
Such point will give a contribution of $m_i(\en_i-m_i+r_i+1)=r_i(r_i-1)$ to $\E$ (cf. Example~\ref{ordinary}). 
For fixed $r_i$ we shall assume that there are exactly $k_{r_i}$ such points.
\end{itemize}
Altogether we have $1+r+s+\sum k_i=N$ singular points.
Let us also denote
\begin{equation}
A:=\max_{1\le i\le N}r_i\label{eq:defA}
\end{equation}
the maximal total number of branches of singular points at finite distance.
\begin{remark}\label{newrem}
Since $m_i\ge r_i$ (multiplicity is never smaller than the number of branches), we have $\mx m_i\ge A$. Therefore $m_1\ge A$.
\end{remark}
Using the above notation we obtain
\[
\E=2r+6s+\sum_{j=2}^A k_jj(j-1)+m_1(\en_1-m_1+r_1+1).
\]
In the sequel we shall use the notation $B=r_1$.
Let also
\begin{equation}\label{eq:defTi}
T_n:=\sum_{j=2}^A k_j j^n.
\end{equation}
We will try to maximise the quantity 
\begin{equation}\label{eq:1E0}
\E=2r+6s+T_2-T_1+m_1(\en_1-m_1+B+1)
\end{equation}
under constrains
\begin{subequations}
\begin{align}
T_1-T_0+B-1&=R&\textrm{see \eqref{eq:defR}}\label{eq:SE1}\\
T_0+r+s+1&=N\label{eq:SE2}\\
m_1-B+r+2s&\le p+2g-1&\textrm{see \eqref{eq:multbound}}\label{eq:SE3}\\
\en_1+T_1-2T_0&\le p+q-2+R+4g-\frac{11}{6}r-\frac72s.&\textrm{see \eqref{eq:bmy3}}
\label{eq:SE4}
\end{align}
\end{subequations}
We shall assume that there is an equality in \eqref{eq:SE4}. As $T_1-2T_0=R-N+2+r+s-A$
by \eqref{eq:SE1} and \eqref{eq:SE2} we have
\begin{equation}
\label{eq:SE5}
\en_1=p+q+4g+N-4-\frac{17}{6}r-\frac92s+B
\end{equation}
\begin{proposition}\label{skomplikowane}
The quantity $\E$ in \eqref{eq:1E0} is maximal under constrains \eqref{eq:SE1}---\eqref{eq:SE4} if the following
conditions are satisfied:
\begin{itemize}
\item[(a)] there is an equality in \eqref{eq:SE3};
\item[(b)] $B=A$.
\item[(c)] $s=0$;
\item[(d)] $k_2=R-1$, $A=2$ and $k_i=0$ for $i\ge 3$;
\item[(e)] $r=N-R-1$.
\end{itemize}
\end{proposition}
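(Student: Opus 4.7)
The plan is to optimise $\E$ coordinate-by-coordinate in the free parameters $m_1$, $\en_1$, $s$, $r$, $B$, and the multi-index $(k_2,\dots,k_A)$, subject to the constraints \eqref{eq:SE1}--\eqref{eq:SE4}. From the outset I take equality in \eqref{eq:SE4}, which maximises $\en_1$ and yields the explicit formula \eqref{eq:SE5}, reducing the problem to five families of independent discrete parameters.

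With $\en_1$ eliminated, the only dependence of $\E$ on $m_1$ is through the quadratic $m_1(\en_1-m_1+B+1)$, whose vertex lies at $m_1=(\en_1+B+1)/2$. Since \eqref{eq:SE5} forces $\en_1$ to be of order $p+q+4g$ while \eqref{eq:SE3} restricts $m_1$ to at most $p+2g+B-r-2s-1$, the vertex sits well above the upper bound, so the quadratic is increasing on the admissible interval and $m_1$ must saturate \eqref{eq:SE3}. This establishes (a) and turns $m_1$ into an explicit affine function of $p,g,B,r,s$. Part (c) then follows by substituting into \eqref{eq:1E0} and differentiating in $s$: the coefficient of $s$ comes out to be $-\tfrac{5}{2}p-2q+(\text{lower order})$, which is negative in the regime of interest (since we are in the case where the finite-distance term dominates $p'\nu'_\infty$), so the maximum lies at $s=0$.

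Parts (b) and (d) are the substance of the argument, and I would treat them together by a discrete swap. Assume $A\ge 3$ and replace one ordinary $(j+1)$-tuple point (for some $j\ge 2$) by one ordinary $j$-tuple point together with one extra ordinary double point, simultaneously lowering $r$ by one in order to preserve $N$. One checks that $R$ is unchanged and that the replacement produces $\Delta T_0=1$, $\Delta T_1=1$, $\Delta T_2=3-2j$, $\Delta r=-1$, $\Delta m_1=1$ and (via \eqref{eq:SE5}) $\Delta\en_1=\tfrac{17}{6}$. Plugging these into \eqref{eq:1E0} and expanding gives
\[
\Delta\E=\tfrac{5}{6}m_1+\en_1+B+\tfrac{17}{6}-2j,
\]
which is positive since $\en_1$ grows with $p+q+4g$. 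Iterating drives all $k_j$ with $j\ge 3$ to zero, forcing $A=2$. The analogous swap between $z_1$ and an ordinary $A$-tuple point (legal by Remark~\ref{newrem}) forces $B=A$, establishing (b). Finally (e) is a bookkeeping consequence of \eqref{eq:SE1}--\eqref{eq:SE2} once $s=0$, $B=A=2$ and $k_j=0$ for $j\ge 3$ are imposed.

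The main obstacle is the accounting in the swap step: each exchange perturbs six coupled quantities at once, and one must ensure that the intermediate configurations remain admissible (in particular $r\ge 0$ and $m_1\ge A$). A secondary subtlety is that the positivity of $\Delta\E$ must hold uniformly in $p,q,g,N,R$ within the admissible range, so one needs to verify that the lower bound on $\en_1$ coming from \eqref{eq:SE5} is strong enough to overpower $2j-B-\tfrac{17}{6}$ for every admissible $j$; this ultimately reduces to the same inequalities that guarantee we are in the ``finite-distance dominates'' case.
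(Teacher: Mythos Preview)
Your overall strategy---saturate \eqref{eq:SE4}, then optimise $m_1$, then $s$, then the branch data, then read off $r$---is exactly the paper's. Two points deserve attention.

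\textbf{Part (c).} Your derivative ``$-\tfrac{5}{2}p-2q+\dots$'' is obtained by varying $s$ while holding $r$, $T_0$ and $N$ all fixed, but these are tied by \eqref{eq:SE2}: $T_0+r+s+1=N$. So the computation as stated is inconsistent. The paper makes exactly this point (see the remark following \eqref{eq:wtracone}): one must first eliminate $r$ via \eqref{eq:SE2} and only then differentiate. Doing so, the leading terms are much smaller and one needs the auxiliary bound $s\le p+2g+T_0+A-N$ (from \eqref{eq:SE2} and \eqref{eq:SE3}) to conclude $\partial\E/\partial s<0$. Your conclusion is correct, but the argument needs this repair.

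\textbf{Part (d).} Your single swap ``$(j{+}1)\to j+2$, $r\to r-1$'' is a more direct route than the paper's, and your formula $\Delta\E=\tfrac{5}{6}m_1+\en_1+B+\tfrac{17}{6}-2j$ is correct; positivity follows cleanly from $\en_1+B+1\ge 2m_1\ge 2A\ge 2(j{+}1)$, so you do not need to appeal to ``$\en_1$ grows with $p+q$''. The paper instead proceeds in three stages: first a convexity swap among the $k_i$ (keeping $r$ fixed) forces $k_3=\dots=k_{A-1}=0$; then $\partial\E/\partial r<0$ shows one should maximise $T_0$ with $T_1-T_0$ fixed, which kills $k_A$; finally $\E$ is seen to decrease in $A$, giving $A=2$. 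The advantage of the paper's detour is that it never requires $r\ge 1$ at an intermediate step, so the admissibility issue you flag simply does not arise. If you want to keep your one-move swap, you should supply an alternative move for the case $r=0$ (for instance, trading a branch between two multibranched points, or between $z_1$ and a $k_j$-point as in your argument for (b)).
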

\begin{proof}
The proof is split into several, mostly trivial, lemmas.
\begin{lemma}\label{poprzednilemat}
$\E$ grows with $m_1$, if $m_1$ satisfies \eqref{eq:SE3}.
\end{lemma}
\begin{proof}
As $m_1\le p+2g+B-r-2s-1\le\frac12(\en_1+B+1)$, the function $m_1\to(\en_1+B+1-m_1)$ is increasing in $m_1$.
\end{proof} 
According to the lemma we put
\[m_1=p+2g-1+B-r-2s.\]
Therefore, from \eqref{eq:1E0}:
\begin{equation}\label{eq:1E2}
\begin{split}
\E=&(p+2g-1+B-r-2s)(q+2g+B-\frac{11}{6}r-\frac52s+N-2)+\\
&+2r+6s+T_2-T_1.
\end{split}
\end{equation}
\begin{lemma}\label{B=A}
$\E$ is optimal if $B=A$.
\end{lemma}
\begin{proof}
Assume that, say for the singular point $z_2$ we have $r_2>B$ branches. The contribution from points $z_1$ and $z_2$ into
$\E$ is equal
\[c:=m_1(\en_1-m_1+r_1+1)+r_2^2-r_2.\]
Consider singular points $z_1'$ and $z_2'$ with the following parameters: $m_1'=m_1+1$, ${\en_1}'=\en_1+1$, $r_1'=r_1+1$,
$r_2'=r_2-1$ and $z_2'$ is an ordinary $(r_2-1)$-tuple point. The contribution from $z_1'$ and $z_2'$ into $\E$
is equal to
\[c'=(m_1+1)(\en_1-m_1+r_1+2)+r_2^2-3r_2+2=\en_1+r_1+5-2r_2+c.\]
But $\en_1+r_1+1\ge 2m_1$ by the proof of Lemma~\ref{poprzednilemat}. Then $c'-c\ge 4+2m_1-2r_2$. But $m_1\ge m_2=r_2$
by assumption that the singular points are ordered. Hence such change of the number of branches leads to an increment of $\E$.
\end{proof}
\begin{remark}
Now we are playing only with inequalities. We are not interested, at least in this paper, whether from the existence of
curve with singular points $z_1,\dots,z_N$ of given type, one can deduce the existence of curve with points of type 
$z_1',z_2',z_3,\dots,z_N$. The answer in general case is apparently negative.
\end{remark}
\begin{remark}
At the beginning of this section we claimed that $\E$ is maximal if $\en_1$ is maximal possible and all other
codimensions are minimal. A rigorous proof of this claim could follow the lines of the proof of Lemma~\ref{B=A}.
\end{remark}
\begin{lemma} If $N$, $R$ and $k_i$ are fixed, so only $r$ and $s$ are allowed to vary, then
$\E$ decreases with $s$.
\end{lemma}
\begin{proof}
By \eqref{eq:SE2} $-r=-N+T_0+s+1$. Therefore
\begin{align*}
\E=&(p+2g+A+T_0-N-s)(q+2g+A-\frac56N+\frac{11}{6}T_0-\frac23s-\frac{1}{6})+\\
&+2N+4s-2+T_2-T_1-T_0,
\end{align*}
where we have written $A$ instead of $B$ according to Lemma~\ref{B=A}.
Thus
\[\frac{\partial\E}{\partial s}=\frac43s-\frac23(p+2g+A+T_0-N)-
(q+2g+A-\frac56N+\frac{11}6T_0-\frac{1}6)+4.\]
On the other hand, combining \eqref{eq:SE2} and \eqref{eq:SE3} we obtain
\[s\le p+2g+T_0+A-N.\]
Therefore 
\[\frac{\partial\E}{\partial s}\le\frac{19}{6}-\frac13s-\frac53A-\frac56T_0-\frac16N.\]
But $A\ge2$, so $\frac{\partial\E}{\partial s}<0$.
\end{proof}
Putting $s=0$ in \eqref{eq:1E2} we obtain. 
\begin{multline}\label{eq:1E3}
\E\le\E_1=\E|_{s=0}=\\
=(p+2g-1+A-r)(q+2g+A-\frac{11}{6}r+N-2)+2r+T_2-T_1.
\end{multline}
\begin{lemma}\label{nowylemat}
If we keep $N$, $R$, $A$ and $r$ fixed then $\E_1$ is maximal
if $k_3=\dots=k_{A-1}=0$.
\end{lemma}
\begin{proof}
The dependency of $\E$, $N$ and $R$ on $k_i$ is hidden in quantities $T_j$ (see \eqref{eq:defTi}). Observe that
if we want to keep $N$ and $R$ fixed, we must fix precisely $T_1$ and $T_0$ by \eqref{eq:SE1} and \eqref{eq:SE2}.

Assume that $2\le x<y<z\le A$ are integers and we apply the change
\begin{align*}
k_x\to& k_x+\delta_x=k_x+\delta_z\frac{z-y}{y-x}\\
k_x\to& k_y+\delta_y=k_y+\delta_z\frac{x-z}{y-x}\\
k_x\to& k_z+\delta_z=k_z+\delta_z\frac{y-x}{y-x}
\end{align*}
with $\delta_z>0$. We have $\delta_x>0$ and $\delta_y<0$. Then $T_0$ and $T_1$ are obviously fixed and 
\[T_2\to T_2+\frac{\delta_z}{y-x}(zx(x-z)+zy(z-y)+xy(y-x))\]
and $zx(x-z)+zy(z-y)+xy(y-x)>0$ (this is left as an exercise).
Putting $x=2$ and $z=A$ we can then make the above change for any $3\le y\le A-1$ with $\delta_y=-k_y$.
Then $T_2$ will increase and so $\E_1$.
\end{proof}
Let us now assume that $k_3=\dots=k_{A-1}=0$. Then
\begin{align*}
T_0&=k_2+k_A\\
T_1&=2k_2+Ak_A.
\end{align*}
So $T_2-T_1=(A+1)T_1-2AT_0$. Expressing $T_1$ and $T_0$ with the help of \eqref{eq:SE1} and \eqref{eq:SE2}
yields
\[T_2-T_1=A(R-N+r-A+2)+(R+N-A-r).\]
Substituting this into $\E_1$ in \eqref{eq:1E3} we get
\begin{equation}\label{eq:1E4}
\begin{split}
\E_1\le\E_2=&(p+2g+A-r-1)(q+2g+A+N-\frac{11}{6}r-2)+\\
&+A(R-N+r+1)+(R+N-r)-A^2.
\end{split}
\end{equation}
\begin{lemma}\label{nowy2}
$\E_2$ is optimal if $k_i=0$ for $i\ge 3$.
\end{lemma}
\begin{proof}
Keeping $A,R$ and $N$ fixed we shall try to optimise the number of unicuspidal singular points with multiplicity $2$.
Let us differentiate \eqref{eq:1E4} with respect to $r$:
\[
\frac{\partial\E_2}{\partial r}=\frac{11}{3}r-\frac{11}{6}(p+2g+A-1)-(q+2g+A+N-2)+A+1.\]
But $r\le p+2g-1$ by \eqref{eq:SE3} so this derivative is bounded by
\[\frac56r-\frac{11}{6}A-N+1\]
As $N\ge r+1$ by \eqref{eq:SE2} and $A\ge 2$ we get $\frac{\partial\E_2}{\partial r}<0$. Therefore we must put
$r$ as small as possible. By \eqref{eq:SE2} this is the same as putting $T_0$ as large as possible, when $T_1-T_0$
is kept fixed (by \eqref{eq:SE1}). Consider the change
\begin{align*}
k_2&\to k_2+\delta_2=\frac{A-1}{A-1}\delta_2\\
k_A&\to k_A+\delta_A=-\frac{2-1}{A-1}\delta_2.
\end{align*}
Then $T_1-T_0=(A-1)k_A+k_2$ is fixed. On the other hand $T_0=k_2+k_A\to T_0+\frac{A-2}{A-1}\delta_2$. Therefore
$T_0$ is maximal if $k_2$ is maximal and $k_A=0$. 
\end{proof}
Assuming $k_i=0$ for $i\ge 3$ we obtain $T_1=2T_0$
 so by \eqref{eq:SE1} and \eqref{eq:SE2}
\begin{equation}\label{eq:1E4.5}
r=N+A-R-2.
\end{equation}
And
\[T_2-T_1=2T_0=2R+2-2A.\]
Substituting the two above quanities into \eqref{eq:1E4} yields
\begin{equation}\label{eq:1E5}
\begin{split}
\E_2\le\E_3=&(p+2g+1+R-N)(q+2g+\frac{11}{6}R-\frac56A-\frac56N+\frac53)+\\
&+2N-2.
\end{split}
\end{equation}
Obviously $\E_3$ decreases with $A$. The minimal value of $A$ is $2$. So we get
\begin{equation}\label{eq:1E6}
\E_3\le\E_4=\E_3|_{A=2}=(p+2g+1+R-N)(q+2g+\frac{11}{6}R-\frac56N)+2N-2.
\end{equation}
The proposition is now proved and $\E\le\E_4$ depends only on $p$, $q$, $g$, $R$ and $N$.
\end{proof}
The remaining part of the proof of Theorem 2 follows the proof of Theorem~1. Define
\[\Delta_4=(p-1)(q-1)-q+p-2g+1-\E_4.\]
As $(p-1)(q-1)+q-p-2g\le\D$ (see \eqref{eq:Milnor2}) and $\E_4\ge\E$, we have $\Delta_4\le\Delta$. To complete
the proof it suffices to show that $\Delta_4>0$.
Differentiating $\Delta_4$ with respect to $q$ yields
\[\frac{\partial\Delta_4}{\partial q}=-2g-3-R+N.\]
By \eqref{eq:MT2A} the latter expression is non--negative. Thus
\[\Delta_4\ge\Delta_5=\Delta_4|_{q=p+1}.\]
More precisely 
\begin{multline*}
\Delta_5=p\left(\frac{11}{6}N-\frac{17}{6}R-4g-3\right)-\\
-\frac{11}{6}R^2+\frac83NR-\frac{17}{3}gR-\frac56N^2+\frac{11}{3}gN-4g^2-
\frac{17}{6}R-\frac16N-6g-1.
\end{multline*}
By \eqref{eq:MT2B} $\Delta_5$ is growing with $p$. The minimal value of $p$ is given by \eqref{eq:SE3}: it is $r-2g+1$.
But $r=N-R$ by \eqref{eq:1E4.5} and the fact that $A=2$. So $p\ge N-R-2g+1$.
Substituting this into $\Delta_5$ yields
\[\Delta_5\ge\Delta_6=\Delta_5|_{p=N-R-2g+1}.\]
After straightforward computations we obtain
\[\Delta_6=\left(N-2g-R-\frac23\right)^2-\left(\frac{20}{3}g+4R+\frac{22}{9}\right).\]
So by \eqref{eq:MT2C} we have $\Delta_6>0$.
\subsection{When $p'$ is dominating.}
Here $\mathcal{E}$ will be maximal when $\nu'_\infty$ is maximal and all other codimensions are minimal possible. 
Assume that we have (compare beginning of Section~\ref{3.2})
\begin{itemize}
\item $r$ unibranched singular points with multiplicity $2$;
\item $s$ unibranched singular points with multiplicity $3$;
\item $k_j$ ordinary $j-$tuple points.
\end{itemize}
Then using $T_i$ as in \eqref{eq:defTi} we can write
\[\E=2r+6s+T_2-T_1+p'\nu'_\infty.\]
And all the variables $p$, $q$, $g$, $k_i$, $r$, $s$, $R$, $A$ and $N$ are subject to contrains similar to 
\eqref{eq:SE1}\dots\eqref{eq:SE4}. Namely
\begin{subequations}
\begin{align}
T_1-T_0&=R\label{eq:NE1}\\
r+s+T_0&=N\label{eq:NE2}\\
r+2s&\le p+2g-1\label{eq:NE3}\\
\nu'_\infty+T_1-2T_0&\le p+q-2+R+4g-\frac{11}{6}r-\frac72s\label{eq:NE4}.
\end{align}
\end{subequations}
The difference between \eqref{eq:NE1}\dots\eqref{eq:NE3} and \eqref{eq:SE1}\dots\eqref{eq:SE3} lies in the fact, that
in the previous section we had a distinguished singular point with multiplicity $m_1$ and $B$ branches.

\begin{lemma}
Under contrains \eqref{eq:NE1}\dots\eqref{eq:NE4}, with fixed $T_i$, $\E$ is maximal if $s=0$.
\end{lemma}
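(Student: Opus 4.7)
The plan is to eliminate $\nu'_\infty$ by assuming equality in \eqref{eq:NE4}. This substitution is optimal because $\nu'_\infty$ appears in $\E$ with the positive coefficient $p'$, so we should push it up to the bound allowed by the BMY-type inequality. After that, I would simply differentiate the resulting expression for $\E$ with respect to $s$, holding $r$, $p$, $q$, $g$, and the $T_i$ fixed, and check the sign.

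Explicitly, setting $\nu'_\infty = p+q-2+R+4g-\tfrac{11}{6}r-\tfrac{7}{2}s - T_1 + 2T_0$ turns the defining formula into
\[
\E = 2r + 6s + T_2 - T_1 + p'\!\left(p+q-2+R+4g-\tfrac{11}{6}r-\tfrac{7}{2}s - T_1 + 2T_0\right).
\]
Direct differentiation then yields
\[
\frac{\partial \E}{\partial s} = 6 - \tfrac{7}{2}p'.
\]
Since we are in the case where $p'$ dominates, we have $p' \ge 3$ (as noted at the start of Section~\ref{Sec3} and reaffirmed at the opening of this subsection), hence $\partial \E/\partial s \le 6 - \tfrac{21}{2} = -\tfrac{9}{2} < 0$. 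Thus $\E$ is strictly decreasing in $s$ and the maximum over admissible configurations is attained at $s = 0$.

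It remains to observe that the configuration with $s = 0$ remains admissible: \eqref{eq:NE1} does not involve $s$, \eqref{eq:NE2} merely redefines $N = r + T_0$, and \eqref{eq:NE3} becomes strictly slacker as $s$ decreases, so no new constraint is violated. I do not anticipate any real obstacle; the argument is genuinely simpler than its analogue in Section~\ref{3.2}, precisely because there is no distinguished singular point whose multiplicity $m_1$ and codimension $\en_1$ must be tuned simultaneously with $s$. The only step that requires minor care is confirming that the hypothesis \textit{``fixed $T_i$''} leaves enough freedom to vary $s$ independently (with $r$ held fixed), but this is immediate once one notes that $N$ is not held fixed in the statement.
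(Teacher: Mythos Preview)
Your argument has a genuine gap: you hold $r$ fixed and let $N$ float, but $N$ is the number of singular points of the given curve $C$ and is fixed throughout the entire argument. With both $N$ and $T_0$ fixed, constraint \eqref{eq:NE2} forces $r+s=N-T_0$, so $r$ must move with $s$; your claim that ``\eqref{eq:NE2} merely redefines $N=r+T_0$'' is precisely the misreading. The paper warns against exactly this pitfall in the Remark following the computation of $\Delta_5$ in Section~\ref{Sec3} (``if we keep $N$ constant then changing $s$ results in changing also $r$''), and the parallel lemma in Section~\ref{3.2} states the fixing of $N$ explicitly.

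Concretely, the paper first uses \eqref{eq:NE2} to write $r=N-T_0-s$, so that $2r+6s=2N-2T_0+4s$ and the coefficient of $s$ in the bound for $\nu'_\infty$ becomes $-\tfrac{5}{3}$; differentiating then gives $\partial\E/\partial s = 4-\tfrac{5}{3}p'$, not your $6-\tfrac{7}{2}p'$. Both happen to be negative for $p'\ge 3$, so the conclusion $s=0$ survives, but your derivative is taken in a direction that leaves the constraint set. In particular, the configuration $(r_{\mathrm{orig}},0)$ you land on does not satisfy \eqref{eq:NE2} with the actual $N$, and the subsequent simplification $2T_0-T_1=N-R-r$ used to obtain \eqref{eq:1E7} relies on \eqref{eq:NE2} holding at $s=0$. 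The fix is the paper's: substitute $r=N-T_0-s$ before differentiating.
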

\begin{proof}
By \eqref{eq:NE2} $r=N-T_0-s$. Then
\[\nu'_\infty\le p+q-2+R+4g+\frac{23}{6}T_0-T_1+\frac{11}{6}T_1-\frac53s.\]
As $2r+6s=2N-2T_0+4s$, $\frac{\partial E}{\partial s}=4s-\frac53p'$. But $p'\ge 3$.
\end{proof}
Using the above lemma we get
\[\E\le\E_7=\E|_{s=0}=2r+T_2-T_1+p'(p+q-2+R+4g-\frac{11}{6}r+2T_0-T_1).\]
In other words, using  \eqref{eq:NE1} and \eqref{eq:NE2} we get $2T_0-T_1=N-R-r$ so
\begin{equation}\label{eq:1E7}
\E_7=2r+T_2-T_1+p'(p+q-2+4g-\frac{17}{6}r+N).
\end{equation}
By Lemma~\ref{nowylemat} we can assume that $k_3=\dots=k_{A-1}=0$.
Thus $T_2-T_1=(A+1)T_1-2AT_0=(A+1)R+(1-A)N+(A-1)r$. So
\[\E_7\le\E_8=A(R-N+r)+R+N+r+p'(p+q-2+4g-\frac{17}{6}r+N).\]
\begin{lemma}\label{geom}
We have $p'>A$.
\end{lemma}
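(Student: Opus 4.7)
The plan is to deduce $p'>A$ from the one-place-at-infinity hypothesis by reducing to a degree count for a carefully chosen auxiliary meromorphic function on $\Sigma$.

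Write $p=p'p_1$ and $q=p'q_1$ with $\gcd(p_1,q_1)=1$. First I would use the assumption that $C$ has a single branch at $z_\infty$ to show that the leading form of the defining polynomial $F$ of $C$, with respect to the $(p,q)$-weighted degree, is a single $p'$-th power:
\[
F_\infty \;=\; c\bigl(x^{q_1}-w_0\, y^{p_1}\bigr)^{p'}
\]
for some $c,w_0\in\CC^*$. Indeed, the monomials on this edge of the Newton polygon are in bijection with the coefficients of a polynomial of degree $p'$ in one variable, and any factorization of that polynomial with more than one distinct root would produce more than one branch at $z_\infty$.

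Next I would introduce the auxiliary function $G:=x^{q_1}-w_0 y^{p_1}$ on $\Sigma$. The choice of $w_0$ cancels the leading Laurent terms of $x^{q_1}$ and $y^{p_1}$ at $t_\infty$, so $G$ has its only pole at $t_\infty$, of order $\rho<pq_1$; hence $G\colon\Sigma\to\CC P^1$ is a covering of degree $\rho$. The key observation is that, for each singular point $z_i$ at finite distance with $r_i$ branches, the $r_i$ preimages $t_{i1},\dots,t_{ir_i}\in\Sigma$ all satisfy $G(t_{ij})=G(z_i)=:g_i$. Choosing $z_i$ to realize the maximum $A$, the single fiber $G^{-1}(g_i)$ contains $A$ distinct points, giving the crude bound $A\le\rho$.

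To improve this to $A<p'$, I would analyze $G$ locally at $t_\infty$ using the Puiseux expansion. The $p'$-th power structure of $F_\infty$ pins down how the cancellation in $G$ propagates, so that the order of $G$ at $t_\infty$ consumes all but at most $p'-1$ units of the degree $\rho$. Together with the previous step this yields $A\le p'-1$, i.e.\ $p'>A$.

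The main obstacle is making the final local count at $t_\infty$ rigorous: one must relate the pole order of $G$ to the semigroup-at-infinity structure forced by the one-place hypothesis, and rule out degenerate coincidences between $g_i$ and the value of $G$ at $t_\infty$. An alternative approach is to reason by contradiction: assuming $A\ge p'$, one shows that the contribution $m_i(\en_i-m_i+r_i+1)$ from an $A$-branched singular point can be made to exceed $p'\nu'_\infty$, placing us in the setting of Section~\ref{3.2} rather than the present ``$p'$ dominating'' case, which is itself a contradiction.
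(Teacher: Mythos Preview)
Your ``alternative approach'' in the last paragraph is exactly the paper's proof, and it is the whole proof: the lemma is not a geometric constraint on $C$ at all, but a tautology about the optimization being carried out. In this subsection one is maximizing $\E$ over admissible configurations of local invariants, and the case hypothesis is precisely that the term $p'\nu'_\infty$ dominates every term $m_i(\en_i-m_i+r_i+1)$. Since $m_i\ge r_i$, a singular point with $r_i=A$ branches has $m_i\ge A$; if $A\ge p'$ then shifting a unit of codimension from $\nu'_\infty$ to $\en_i$ changes $\E$ by $m_i-p'\ge 0$, so $p'\nu'_\infty$ cannot be the strictly dominating term. That is all the paper does.

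The elaborate first plan, by contrast, is aimed at a statement that is simply false as a geometric fact. Take any curve with one place at infinity and $\gcd(p,q)=1$ (so $p'=1$) possessing a node at finite distance; then $A=2>p'$. Such curves exist in abundance, and nothing in the one-place hypothesis prevents multibranched finite singularities from outnumbering $p'$. Your step~5 (``the order of $G$ at $t_\infty$ consumes all but at most $p'-1$ units of the degree $\rho$'') has no mechanism behind it: the pole order $\rho$ of $G$ is typically much larger than $p'$, and there is no reason the fibre of $G$ over a finite value should have fewer than $p'$ points. So the obstacle you flag is not a technicality but a genuine obstruction: the geometric route cannot succeed, because the conclusion is contextual, not intrinsic. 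Promote your final paragraph to the proof and discard the rest.
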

\begin{proof}
If $p'\le A$, the term $p'\nu'_\infty$ is not dominating in $\E$, because some multiplicity of a singular point (say $z_1$) 
at finite distance is larger than $p'$. Then increasing $\en_1$ at the cost of decreasing $\nu'_\infty$ makes $\E$ grow.
\end{proof}
From this lemma we conclude that $\frac{\partial\E_8}{\partial r}<0$. Using essentially the same arguments as in the proof 
of Lemma~\ref{nowy2} we infer that $A=2$ and $k_i=0$ for $i\ge 3$. Then $T_1=2T_0$ so
\begin{equation}\label{eq:1E8}
r=N-R.
\end{equation}
The meaning of this formula is clear: $N$ is the number of all singular points, under assumption that $A=2$, $R$ becomes
the number of singular points with $2$ branches and $r$ is the number of singular points with one branch, because
$s=0$.

Substituting \eqref{eq:1E8} into $\E_8$ we obtain
\[\E_8\le\E_9=2N+p'(p+q-2+4g+\frac{17}{6}R-\frac{11}{6}N).\]
The appearance of $2N$ in $\E_9$ is not surprising, for any unibranched singular point contributes $2$ and
any ordinary double point contributes $2$. So $2N$ is the contribution into $\E$ of all singular points at finite distance.

Now define
\[\Delta_9=(p-1)(q-1)-p'+1-2g-\E_9.\]
Obviously $\Delta\ge\Delta_9$. We shall strive to show that $\Delta_9>0$. 
We see that $\Delta_9$ grows with $q$ so
\begin{align*}
\Delta_9\ge\Delta_{10}=\Delta_9|_{q=p+p'}=&(p-1)(p+p'-1)+1-2g-2N\\
&-p'(2p+p'-1+4g+\frac{17}{6}R-\frac{11}{6}N).
\end{align*}
But $p'\to\Delta_{10}(p')$ is a concave function. If $p'\in[3,p/2]$ (by Lemma~\ref{geom} $p'>A\ge 2$), 
$\Delta_{10}$ attains its minimum at the boundary
of this interval. 

\underline{\textit{Estimating $\Delta_{10}(3)$.}} Assuming $p'=3$ we get
\[\Delta_{10}(3)=p^2-5p-14g-\frac{17}{2}R+\frac{7}{2}N-7.\]
As $p'\ge 3$, $\Delta_{10}(3)$ grows with $p$. Putting $p\ge N-R-2g+1$ we get
\[\Delta_{10}(3)\ge\Delta_{11}=(N-2g-R+\frac14)^2-7g-5R-\frac{177}{16}.\]
By \eqref{eq:MT2D} we get $\Delta_{11}>0$.

\smallskip
\underline{\textit{Estimating $\Delta_{10}(p/2)$.}} Assuming $p'=p/2$ we get
\[\Delta_{10}(p/2)=\Delta_{12}=\frac14p^2+p(-2g-\frac{17}{12}R+\frac{11}{12}N-2)-2g-2N+2.\]
Then $\frac{\partial\Delta_{11}}{\partial p}=\frac12p-2g-\frac{17}{12}R+\frac{11}{12}N-1$.
By \eqref{eq:NE3} and \eqref{eq:1E8} we have $p\ge r-2g+1=N-R-2g+1$,so
\[\frac{\partial\Delta_{12}}{\partial p}\ge \frac{17}{12}(N-\frac{36}{17}g-\frac{23}{17}R-\frac{18}{17}).\]
Using \eqref{eq:MT2E} we get that this derivative is non--negative.
Hence
\begin{multline*}
\Delta_{12}\le\Delta_{13}
=\frac76\left(\left(N-\frac{29}{14}g-\frac{17}{14}R-\frac{31}{28}\right)^2-\right.\\
-\left.\left(\left(\frac{1}{14}g+\frac{3}{14}R\right)^2+\frac{1067}{196}g
+\frac{513}{196}R+\frac{793}{784}\right)\right).
\end{multline*}
By \eqref{eq:MT2F} $\Delta_{13}>0$. The proof is completed
\begin{remark}\label{ong=0}
As it was mentioned at the end of Section~\ref{S12}, for $g=0$, $R=1,2$ the bounds \eqref{eq:MT2C}, \eqref{eq:MT2D}
and \eqref{eq:MT2F} are unsatisfactory. The reason is the same as observed in Remark~\ref{ong=1}: for small $g$ and $R$,
the bound $p\ge N-R-2g+1$ is weaker than the bound $p\ge 6$. Repeating the arguments of this Section for $g=0$ and $R=1,2$,
using the inequality $p\ge 6$ instead would show that $N\le 3$ for $g=0,R=1$ and $N\le 5$ for $g=0,R=2$. We do
not give the straightforward prove here.
\end{remark}
\begin{remark}
The method presented in this paper can be applied to bound the number of singular points of an arbitrary algebraic curve in $\CC^2$.
We plan to investigate it in subsequent papers.
\end{remark}
\begin{acknowledgements}
The author expresses his thanks to Henryk \.Zo\l\c{a}dek for stimulating discussions. Part of the work has been
completed during the stage of the author at McGill University in Montreal. The author is grateful to Peter Russell
for the invitation.
\end{acknowledgements}

\end{document}